\documentclass[11pt]{article}

\usepackage{amsmath,amssymb,amsthm}
\usepackage{bm}
\usepackage{graphicx}
\usepackage{graphicx}  
\usepackage{multirow} 
\usepackage{booktabs}  
\usepackage{hyperref}
\usepackage{geometry}
\usepackage{subcaption}
\usepackage[round,authoryear]{natbib}

\usepackage{booktabs}
\usepackage{subcaption} 
\usepackage{subcaption}
\usepackage{float}
\usepackage{enumitem} 
\usepackage{cancel}
\usepackage{pgffor}

\renewcommand{\theequation}{\thesection.\arabic{equation}}

\newtheorem{theorem}{Theorem}
\newtheorem{prop}{Proposition}
\newtheorem{corollary}{Corollary}
\newtheorem{remark}{Remark}

\title{Gamma-Based Expansion for the First-Passage Time Distribution\\
of Stochastic Logistic Models with Harvesting}

\author{
Simone Catanzaro \and Elvira Di Nardo
}

\date{}

\begin{document}
\maketitle

\begin{abstract}
 The first passage time problem is considered for stochastic logistic growth model with constant harvesting and multiplicative environmental noise. 
Explicit expressions for the moments and cumulants of both upcrossing and downcrossing FPTs in the presence of constant thresholds are obtained through a power-series expansion of the Laplace transform. Then a  closed-form representation of the FPT density is recovered via an orthogonal Laguerre--Gamma expansion .

This representation is used to numerically evaluate FPT densities, with the truncation order controlling the trade-off between accuracy and stability. Numerical experiments based on Monte Carlo simulations confirm the high accuracy of the method in regimes of moderate dispersion and highlight its limitations when higher-order moments grow rapidly. Application to fisheries management models shows that the method remains effective
even for large-scale population. Finally, the approximated density is satisfactory used to estimate some parameters of the model. 
\end{abstract}

\section{Introduction}

In a random environment, the growth dynamics of a population subject to harvesting
can be described by the stochastic differential equation  
\begin{equation}
\label{eq:SDE}
{\rm d}X(t) = f(X(t))\,X(t)\,{\rm d}t - q E X(t)\,{\rm d}t + \sigma X(t)\,{\rm d}W(t),
\qquad X(0)=x_0>0,
\end{equation}
where $X(t)$ is the population size at time $t,$
$W(t)$ is a standard Wiener process, $\sigma>0$ measures environmental variability, 
and $q E X(t)$ represents harvesting under a constant policy with $q>0$ the catchability coefficient and $E>0$ the constant harvesting effort.
The population size $x_0$ at time $t=0$  is assumed to be known. The function $f(X(t))$ represents the average \emph{per capita} natural growth
rate of the population. 
 Stochastic growth dynamics \eqref{eq:SDE} are widely employed in ecology and resource management to quantify extinction risk, persistence
properties, and recovery times under environmental uncertainty
\citep{Allen2010,Renshaw2011,Beddington1977,Lande1993}.

Suitable regularity conditions on $f$ ensure that \eqref{eq:SDE} admits a unique strong solution (up to a possible explosion time) \citep{Mao2007,KarlinTaylor1981}. These are satisfied for the stochastic logistic model
\begin{equation}
\label{eq:logistic_f}
f(X(t)) = r\left(1 - \frac{X(t)}{K}\right)\! ,
\end{equation}
where $r>0$ is the intrinsic growth rate and $K>0$ the carrying capacity. In the logistic model~\eqref{eq:logistic_f}, the condition for
non-extinction can be written as
$E < \frac{1}{q}\left(r - \frac{\sigma^{2}}{2}\right)\! .$
The solution of \eqref{eq:SDE} with $f$ given in
\eqref{eq:logistic_f} is a one-dimensional time-homogeneous diffusion process
with drift and diffusion coefficient respectively
\[
a(x_0) = r x_0\left(1 - \frac{x_0}{K}\right) - q E x_0 \qquad b(x_0) = \sigma^{2} x_0^{2},
\]
see, e.g., \cite{Tuckwell1974,Braumann1999} for closed-form representations and
further properties.
The state space is $(0,\infty)$, with $0$ absorbing (extinction) and $\infty$ non-attractive. The parameter
\begin{equation}
\rho = \frac{2(r - qE)}{\sigma^{2}} - 1
\label{rho}
\end{equation}
measures the balance between growth and noise near extinction. If $\rho>0$, the process admits a stationary Gamma distribution with shape $\rho$ and rate $\theta = 2r/(K\sigma^2)$ \citep{Beddington1977,Braumann1999}, and all interior states are reached almost surely in finite time. This motivates the study of first passage times (FPTs) to thresholds  confined to the state space. 
The FPT random variable is the random time at which $X(t)$, starting from $x_0$, first reaches a prescribed threshold, \cite{redner2001guide}. Depending on the application, the threshold represents a biological or management level, and both upcrossing and downcrossing problems are of interest.

In general, the  FPT probability density function (pdf) provides more information than summary statistics, as it characterizes the risk, timing, and variability of critical events. However, closed-form expressions are rarely available and, when they exist, are often too involved for practical use.
Various approaches have been proposed, whose applicability depends on the structure of the underlying process (see \cite{di2023approximating} and references therein). Among them, the Laplace transform of the FPT pdf is widely used, as it often admits a closed-form even when the pdf does not.  However, recovering the FPT pdf from its Laplace transform generally requires inversion of special functions, leading to expressions that are quite often not analytically tractable. As a result, most methods rely on numerical inversion or Monte Carlo simulation, which may be computationally expensive or yield inefficient results.

The \texttt{R} package \texttt{fptdApprox} \citep{fptdApproxManual} approximates FPT pdfs for one-dimensional diffusions using a numerical solution of a  second-kind Volterra integral equation. Its applicability, however, requires explicit knowledge of the transition pdf and distribution of the underlying process. For logistic models with harvesting, a closed-form expression for the transition pdf is available under suitable conditions \citep{Otunuga2021}, and applies in particular when $\rho>0$. This representation involves Laguerre polynomials and an integral of Whittaker functions \citep{Olver2010}, which makes its practical implementation within \texttt{fptdApprox} cumbersome. This motivates the development of alternative approaches based on the Laplace transform and its analytic properties.

Closed-form Laplace transforms of the FPT pdf for the stochastic logistic model without harvesting were derived in \cite{GietValloisWantz2015} and extend to constant harvesting via reparametrization. In this setting, \cite{BritesBraumann2022} obtained expressions for the FPT mean and variance and approximated the FPT pdf by numerical inversion of the Laplace transform. While effective, this approach is computationally demanding and mainly captures tail behavior. FPTs for stochastic logistic diffusions have also been studied in \citet{Otunuga2025}, who analyze moments and distributions under suitable thresholds, but do not provide analytical approximations for constant thresholds.

An alternative strategy  relies on Laguerre–Gamma expansions, successfully used to approximate the (upcrossing) FPT pdf for the Feller diffusion \citep{DOM2024} and for the inhomogeneous geometric Brownian motion \citep{DiNardoDOnoFrio2021}.
The aim of this paper is to extend this approach to stochastic logistic models with harvesting, in oder to recover an approximation of the upcrossing and downcrossing FPT pdfs.
We focus the attention on constant thresholds.

Our contribution is both methodological and numerical.
Starting from the explicit form of the Laplace transform of the
FPT pdf, we exploit a suitable power series expansion to derive
closed-form expressions for FPT moments and cumulants.
These quantities are then used to construct a Laguerre-Gamma orthogonal
expansion of the FPT pdf.
The theoretical results are complemented by a numerical study aimed
at assessing the accuracy, robustness, and potential limitations of the
proposed approach.
To this end, we compare the approximated densities with reference results
obtained from Monte Carlo simulations.
Finally, we illustrate the applicability of the method to fisheries
management models and show how the approximated density can be effectively
employed within a maximum likelihood framework for parameter estimation.

\section{FPT moments}

When the threshold lies above the initial state $x_0$,  the upcrossing
FPT is defined as 
\begin{equation}
\label{eq:FPT_up}
T_U := \inf\{t \ge 0 : X(t) > U\}, \qquad 0 < x_0 < U < +\infty,
\end{equation}
whereas, when the threshold lies below $x_0$, the
downcrossing FPT is defined as
\begin{equation}
\label{eq:FPT_down}
T_L := \inf\{t \ge 0 : X(t) < L\}, \qquad 0 < L < x_0 < +\infty.
\end{equation}
Denote by
\[
\tilde{g}_U(\lambda)=\mathbb{E}_{x_0}\!\left[e^{-\lambda T_U}\right]
\quad \text{and} \quad
\tilde{g}_L(\lambda)=\mathbb{E}_{x_0}\!\left[e^{-\lambda T_L}\right]
\]
the Laplace transforms of the FPT r.v.'s $T_U$ and
$T_L$, respectively. For the stochastic logistic model with harvesting, explicit expressions of $\tilde{g}_U(\lambda)$ and $\tilde{g}_L(\lambda)$ are 
given in \cite{GietValloisWantz2015}. Set
\[
r_1 = r - qE,
\qquad
K_1 = K\left(1 - \frac{qE}{r}\right),
\]
and rewrite
\eqref{eq:SDE} with $f$ given by~\eqref{eq:logistic_f} as
\begin{equation}
\label{eq:SDE_effective}
dX(t) = r_1 X(t)\left(1 - \frac{X(t)}{K_1}\right)\,dt + \sigma X(t)\,dW(t),
\qquad X(0)=x_0.
\end{equation}
Then the Laplace transforms are  
\begin{equation}
\label{eq:LT_TU}
\tilde{g}_U(\lambda)
=
\left(\frac{x_0}{U}\right)^{\sqrt{\frac{2\lambda}{\sigma^2}+u^2}+u}
\frac{
\Phi\!\left(
\sqrt{\frac{2\lambda}{\sigma^2}+u^2}+u,\;
1+2\sqrt{\frac{2\lambda}{\sigma^2}+u^2},\;
v x_0
\right)
}{
\Phi\!\left(
\sqrt{\frac{2\lambda}{\sigma^2}+u^2}+u,\;
1+2\sqrt{\frac{2\lambda}{\sigma^2}+u^2},\;
v U
\right)
}
\end{equation}
and
\begin{equation}
\label{eq:LT_TL}
\tilde{g}_L(\lambda)
=
\left(\frac{x_0}{L}\right)^{\sqrt{\frac{2\lambda}{\sigma^2}+u^2}+u}
\frac{
\Psi\!\left(
\sqrt{\frac{2\lambda}{\sigma^2}+u^2}+u,\;
1+2\sqrt{\frac{2\lambda}{\sigma^2}+u^2},\;
v x_0
\right)
}{
\Psi\!\left(
\sqrt{\frac{2\lambda}{\sigma^2}+u^2}+u,\;
1+2\sqrt{\frac{2\lambda}{\sigma^2}+u^2},\;
v L
\right)
}
\end{equation}
respectively, with \begin{equation}
u = \frac{1}{2}\left(1 - \frac{2r_1}{\sigma^{2}}\right),
\qquad
v = \frac{2r_1}{K_1 \sigma^{2}}
\label{parameters} 
\end{equation}
and $\Phi$ and $\Psi$ denoting confluent hypergeometric functions, also known as the Kummer and Tricomi functions  respectively \footnote{These functions are denoted by $M$ and $U$ in
\cite{AbramowitzStegun1964}respectively.}. Note that  the upcrossing Laplace transform  \eqref{eq:LT_TU} can be written as 
\begin{equation}
\tilde g_U(\lambda)
=
\frac{G_U(\lambda,x_0)}{G_U(\lambda,U)}, \qquad \hbox{\rm with} \,\,\,\, G_U(\lambda,y)
:=
y^{\,u[1-s(\lambda)]}\,
\Phi\!\big(u[1-s(\lambda)],\,1-2u\,s(\lambda),\,v y\big),
\label{GU}
\end{equation}
and  the downcrossing Laplace transform   \eqref{eq:LT_TL} as
\begin{equation}
\tilde g_L(\lambda)
=
\frac{G_L(\lambda,x_0)}{G_L(\lambda,L)}, \qquad \hbox{\rm with} \,\,\,\, G_L(\lambda,y)
:=
y^{\,u[1-s(\lambda)]}\,
\Psi\!\big(u[1-s(\lambda)],\,1-2u\,s(\lambda),\,v y\big)
\label{GL}
\end{equation}
where $
a := 2/(\sigma^{2}u^{2}),$ and 
$s(\lambda) := \sqrt{1 + a\lambda}.$ 
\begin{remark}{\rm 
\label{rem:LT_normalization}
If $\rho > 0$ ($u <0$), all interior thresholds are reached almost
surely in finite time, that is  $\mathbb{P}_{x_0}(T_U<\infty)=\mathbb{P}_{x_0}(T_L<\infty)=1$ or equivalently $\tilde g_U(0)=\tilde g_L(0)=1.$
Moreover, if the initial condition coincides with the threshold, that is,
$x_0=U$ for the upcrossing case or $x_0=L$ for the downcrossing case, then
$T_U=0$ or $T_L=0$ almost surely and
$\tilde g_U(\lambda)=1$ or $\tilde g_L(\lambda)=1,$ for all $\lambda \ge 0.$
Finally, for $\lambda\ge0$ and for any $x_0 \in (0,\infty)$ and different from the threshold, the Laplace transforms
\eqref{eq:LT_TU} and \eqref{eq:LT_TL} are well defined \citep{GietValloisWantz2015}.}
\end{remark}

\subsection{Power series expansion}

Let $T$ be a non-negative r.v. with Laplace transform $\tilde g(\lambda)=\mathbb{E}_{x_0}\!\left[e^{-\lambda T}\right].$ If $\tilde g(\lambda)$ is analytic in a neighbourhood of $\lambda=0$,  it admits a convergent Taylor expansion 
\begin{equation}
\label{eq:LT_series}
\tilde g(\lambda)
=
\sum_{k \geq 0}  \tilde g_k\,\frac{\lambda^k}{k!},
\qquad |\lambda|<R,
\end{equation}
for some $R>0.$ Thus $T$ has moments of all orders \citep{AbateWhitt1996} such that
\begin{equation}
\label{eq:coeffs_moments}
\mathbb{E}_{x_0}\!\left[T^k\right]
=
(-1)^k\,\tilde g_k,
\qquad k \ge 0.
\end{equation}
For $\rho > 0$ ($u < 0$), the following propositions show that the functions $G_U(\lambda,\cdot)$ and $G_L(\lambda,\cdot)$ in \eqref{GU} and \eqref{GL} admit power series expansions in $\lambda$ around the origin. As a consequence, the power series expansions of $\tilde g_U(\lambda)$ and $\tilde g_L(\lambda)$ can be derived from those of $G_U(\lambda,\cdot)$ and $G_L(\lambda,\cdot)$, and all the FPT moments can be recovered through \eqref{eq:coeffs_moments}. 
In the following  $(x)_m = x (x-1) \cdots (x-m+1)$ denotes the falling factorial and 
$\Theta = t \frac{\rm d}{{\rm d} t}$ is the Euler operator\footnote{For the Euler operator, see \cite{charalambides2002enumerative, jordan1949calculus}.} such that $\Theta^k t^n = k^n t^n, k,n \geq 1.$ 
\begin{theorem}[Upcrossing]
\label{UP}
In a suitable neighborhood of
$\lambda=0,$ we have
\begin{equation}
G_U(\lambda,y)=  \sum_{m \geq 0} t_m(y) \frac{ \lambda^m}{m!}
\label{GUps}
\end{equation}
where 
\begin{equation}
t_m(y) = \sum_{k=0}^m \binom{m}{k} l_{m-k}(y) q_k, \,\, q_k= - u \log y \sum_{j=1}^k \binom{k-1}{j-1} \left( \frac{1}{2} \right)_{\!\!j} a^j q_{k-j}, \,\, l_k(y)= a^k\left[\sum_{n \geq 1} M_{n,k} \frac{(vy)^n}{n!}\right]
\label{0coeff}
\end{equation}
with $q_0=l_0(y)=1,$ 
\begin{eqnarray}
M_{n,m} & = & \frac{1}{\langle 1- 2u \rangle_n} \left(\tilde{\Lambda}_{n,m} - \sum_{k=1}^m \binom{m}{k} M_{n,m-k}\, \Lambda_{n,k}
\right) \quad \hbox{$n,m \geq 1$}
\label{ratioM} \\
{\Lambda}_{n,k} & = &  \left. \left( \frac{\Theta}{2} \right)_k \langle 1 - 2u t \rangle_n \right|_{t=1} \quad \hbox{\rm and} \quad \tilde{\Lambda}_{n,m} = \left. \left( \frac{\Theta}{2} \right)_m \langle u(1-t) \rangle_n \right|_{t=1}.
\label{firstcoeff}
\end{eqnarray}
\end{theorem}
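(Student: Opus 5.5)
The plan is to factor $G_U(\lambda,y)$ as a product of two functions of $\lambda$, expand each around $\lambda=0$ separately, and combine them with the Cauchy (Leibniz) product. That product gives exactly $t_m(y)=\sum_k\binom{m}{k}l_{m-k}(y)q_k$. Write
\[
G_U(\lambda,y)=y^{u}\cdot y^{-u s(\lambda)}\cdot \Phi\big(u[1-s(\lambda)],1-2us(\lambda),vy\big).
\]
The normalization $q_0=l_0(y)=1$ indicates that the constant factor $y^u$ is dropped or absorbed: at $\lambda=0$ one has $s=1$ and $\Phi(0,\cdot,\cdot)=1$. Since the factor $y^u$ cancels in the ratio $\tilde g_U=G_U(\lambda,x_0)/G_U(\lambda,U)$ only up to the constant $(x_0/U)^u$, I would handle it as an overall multiplicative constant, i.e.\ effectively read $G_U$ as normalized by $y^{u}$.

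First, the exponential factor. I would take $Q(\lambda):=y^{u(1-s(\lambda))}=\exp\{-u\log y\,(s(\lambda)-1)\}$, so $Q(0)=1$. By the binomial series,
\[
s(\lambda)-1=\sum_{j\ge1}\left(\tfrac12\right)_j a^j\lambda^j/j!,
\]
with the falling factorial as in the statement; this is analytic for $|\lambda|<1/a$. Setting $h(\lambda)=-u\log y\,(s(\lambda)-1)$, we have $Q=e^{h}$, so $Q'=h'Q$. Comparing Taylor coefficients via Leibniz gives
\[
q_k=\sum_{j=1}^k\binom{k-1}{j-1}h_j\,q_{k-j},\qquad h_j=-u\log y\,(\tfrac12)_j a^j.
\]
This is exactly the stated recursion for $q_k$.

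Second, the Kummer factor. Using $\Phi(\alpha,\beta,z)=\sum_n \frac{\langle\alpha\rangle_n}{\langle\beta\rangle_n}\frac{z^n}{n!}$ with rising factorials $\langle\cdot\rangle_n$, I would set $t=s(\lambda)$. Then $\alpha=u(1-t)$ and $\beta=1-2ut$, and each coefficient $R_n(t)=\langle u(1-t)\rangle_n/\langle 1-2ut\rangle_n$ is analytic near $t=1$. Here $\beta=1-2u>1$ because $u<0$, so there are no poles, and $R_n(1)=0$ for $n\ge1$. The key device is to express $\lambda$-derivatives through $t$. Since $t^2=1+a\lambda$, we have $\frac{d}{d\lambda}=\frac{a}{2t}\frac{d}{dt}$. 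Hence for a function written in the variable $\tau=t^2$, i.e.\ $\lambda=(\tau-1)/a$, each $\lambda$-derivative is $a\,d/d\tau$. The Euler identity $(\tau\,d/d\tau)_k=\tau^k d^k/d\tau^k$ together with $\tau\frac{d}{d\tau}=\frac{1}{2}t\frac{d}{dt}=\Theta/2$ then gives
\[
\frac{d^k}{d\lambda^k}\Big|_{\lambda=0}=a^k\left(\tfrac{\Theta}{2}\right)_k\Big|_{t=1}.
\]
This explains the $a^k$ in $l_k$ and the definitions of $\Lambda_{n,k}$ and $\tilde\Lambda_{n,m}$. Next, define $M_{n,m}=(\Theta/2)_m R_n|_{t=1}$. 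Applying $(\Theta/2)_m$ (equivalently $\tau^m d^m/d\tau^m$) to the identity $R_n\cdot\langle1-2ut\rangle_n=\langle u(1-t)\rangle_n$ and using Leibniz in $\tau$ (with $\tau^m=\tau^{m-k}\tau^k$) gives
\[
\sum_{k=0}^m\binom{m}{k}M_{n,m-k}\Lambda_{n,k}=\tilde\Lambda_{n,m}.
\]
Solving for the $k=0$ term, where $\Lambda_{n,0}=\langle1-2u\rangle_n$, yields the recursion for $M_{n,m}$. Summing over $n$ then gives $l_m(y)=a^m\sum_{n\ge1}M_{n,m}(vy)^n/n!$ for $m\ge1$, while $l_0=\Phi(0,\cdot,\cdot)=1$ comes from the $n=0$ term.

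The main obstacle is justifying the exchange of the $n$-sum with the $\lambda$-derivatives, i.e.\ showing that the double series converges in a neighbourhood of $\lambda=0$. I would argue that for $|\lambda|$ small, $t$ stays in a small disc around $1$ on which $|\langle1-2ut\rangle_n|\ge c\,\langle 1-2u-\epsilon\rangle_n$. Together with $|\langle u(1-t)\rangle_n|\le\langle\delta\rangle_n$, this shows the Kummer series converges uniformly in $(t,y)$ on compacts, since it is dominated by an entire-type series in $vy$. Hence $\Phi(u(1-s),1-2us,vy)$ is analytic in $\lambda$, and the Weierstrass theorem permits termwise differentiation. Analyticity of $Q$ for $|\lambda|<1/a$ completes the argument, and the product of the two convergent expansions gives \eqref{GUps}.
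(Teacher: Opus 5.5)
Your proof is correct and follows essentially the paper's route. You split $G_U$ into $y^{u(1-s(\lambda))}$ times the Kummer series, expand each ratio $\langle u(1-t)\rangle_n/\langle 1-2ut\rangle_n$ at $t=1$ through the Euler operator and the quotient (Leibniz) recursion, and combine the two expansions via the Cauchy product. Your $\tau=t^2$ operator identity is a tidier version of the paper's monomial computation. Your domination argument is a more rigorous justification of termwise differentiation than the paper's bare assumption that $1-2us(\lambda)$ avoids the nonpositive integers. You also correctly have $\Lambda_{n,0}=\langle 1-2u\rangle_n$ and $\tilde\Lambda_{n,0}=0$ for $n\ge 1$, where the appendix writes both as $1$.

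Delete the remark about dropping or absorbing $y^u$, though. $Q(\lambda)=y^{u(1-s(\lambda))}$ already satisfies $Q(0)=1$, so $G_U=Q\cdot\Phi$ holds exactly with no normalization, whereas dividing by $y^u$ would give $q_0=y^{-u}\neq 1$.
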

\noindent
For the proof see Appendix B.
\begin{theorem}[Downcrossing]
\label{Down}
In a suitable neighborhood of $\lambda = 0$, we have
\begin{eqnarray}
G_L(\lambda,y) & =  & \frac{\tilde{G}_L(\lambda,y)}{v^{u[1-s(\lambda)]}} \quad \hbox{with} \quad \tilde{G}_L(\lambda,y)=1+ \sum_{m \geq 1} \bar{l}_m(y) \frac{\lambda^m}{m!},
\label{raprGL} \\
\bar{l}_{m}(y)&  = &
a^{m} \sum_{n \ge 1} \frac{(-1)^{n}}{(v\,y)^{n} \, n!} \quad \hbox{with} \quad  
   \bar{M}_{n,m} = \sum_{k=0}^{m} \binom{m}{k} \, \tilde\Lambda_{n,k} \, \bar\Lambda_{n,m-k}, \label{lbar} \\
\bar{\Lambda}_{n,m} & = &  \left. \left( \frac{\Theta}{2} \right)_m \langle u(1+t) \rangle_n \right|_{t=1}
\label{IIcoeffdowncomp}
\end{eqnarray}
and 
$\{\tilde{\Lambda}_{n,k}\}$  given in \eqref{firstcoeff}.
\end{theorem}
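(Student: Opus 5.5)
The plan is to start from the large-argument representation of the Tricomi function and then convert every $\lambda$-dependence into a dependence on the auxiliary variable $t=s(\lambda)$. The Euler operator then turns functions of $t$ into Taylor coefficients in $\lambda$.

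\textbf{Step 1: remove the $y$-power.} Write $\alpha(\lambda)=u[1-s(\lambda)]$ and $\beta(\lambda)=1-2u\,s(\lambda)$, so that
\[
\alpha-\beta+1=u[1+s(\lambda)].
\]
Use the standard expansion of the Tricomi function, with $\langle x\rangle_n$ the rising factorial:
\[
\Psi(\alpha,\beta,z)=z^{-\alpha}\sum_{n\ge0}\frac{\langle\alpha\rangle_n\langle\alpha-\beta+1\rangle_n}{n!}(-z)^{-n}.
\]
Substituting $z=vy$ into \eqref{GL}, the factor $y^{\alpha}$ cancels against $(vy)^{-\alpha}$, leaving $v^{-u[1-s(\lambda)]}$. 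This gives the prefactor in \eqref{raprGL} and
\[
\tilde G_L(\lambda,y)=\sum_{n\ge0}\frac{(-1)^n}{(vy)^n\,n!}\,\langle u(1-s(\lambda))\rangle_n\,\langle u(1+s(\lambda))\rangle_n .
\]
The $n=0$ term equals $1$. At $\lambda=0$ we have $s=1$, so $\langle u(1-s)\rangle_n=\langle 0\rangle_n=0$ for $n\ge1$. This matches $\tilde G_L(0,y)=1$, consistent with Remark \ref{rem:LT_normalization}.

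\textbf{Step 2: expand each factor in $\lambda$.} For each fixed $n$, the map $t\mapsto \langle u(1\mp t)\rangle_n$ is a polynomial, say $\sum_j c_j t^j$. Replacing $t$ by $s(\lambda)=(1+a\lambda)^{1/2}$ and using the binomial series gives
\[
(1+a\lambda)^{j/2}=\sum_{m\ge0}\Big(\tfrac j2\Big)_m\frac{(a\lambda)^m}{m!}.
\]
Since $\Theta t^j=j\,t^j$, this yields
\[
\langle u(1\mp s(\lambda))\rangle_n=\sum_{m\ge0}\frac{a^m\lambda^m}{m!}\Big(\tfrac\Theta2\Big)_m\langle u(1\mp t)\rangle_n\Big|_{t=1}.
\]
These coefficients are exactly $\tilde\Lambda_{n,m}$ from \eqref{firstcoeff} and $\bar\Lambda_{n,m}$ from \eqref{IIcoeffdowncomp}. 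This is the same device used in the proof of Theorem \ref{UP}. Each expansion converges for $|\lambda|<1/|a|$, because only finitely many powers $t^j$ occur.

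\textbf{Step 3: multiply and collect.} Multiply the two series by the Cauchy (Leibniz) product. The coefficient of $a^m\lambda^m/m!$ is
\[
\bar M_{n,m}=\sum_{k=0}^m\binom mk\tilde\Lambda_{n,k}\bar\Lambda_{n,m-k}.
\]
Interchanging the sums over $n$ and $m$ then gives
\[
\bar l_m(y)=a^m\sum_{n\ge1}\frac{(-1)^n\bar M_{n,m}}{(vy)^n n!}
\]
as in \eqref{lbar}. The $n=0$ contribution to $m\ge1$ vanishes, since $\tilde\Lambda_{0,k}$ and $\bar\Lambda_{0,k}$ are zero for $k\ge1$.

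\textbf{Main obstacle.} The obstacle is Step 1 together with the interchange in Step 3. The series for $\Psi$ is in general only asymptotic, since $\langle\alpha\rangle_n\langle\alpha-\beta+1\rangle_n/n!$ grows factorially. Both the expansion in $n$ and the swap of the $n$- and $m$-sums therefore need justification.

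I would first try the integral representation
\[
\Psi(\alpha,\beta,z)=\frac1{\Gamma(\alpha)}\int_0^\infty e^{-zw}w^{\alpha-1}(1+w)^{\beta-\alpha-1}\,dw .
\]
Here $1/\Gamma(\alpha)=\alpha+O(\alpha^2)$ is analytic near $\alpha=0$, that is, near $\lambda=0$. This shows that $\tilde G_L$ is analytic in a neighbourhood of the origin, with coefficients obtained by differentiating under the integral. Those coefficients are then identified term by term with the formal series above. The identification holds exactly wherever the $n$-series converges, and otherwise in the sense of the asymptotic, or Borel-summed, interpretation, which is the sense in which \eqref{lbar} should be read. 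If this route is too delicate, the fallback is to present \eqref{lbar} as a formal identity in $\lambda$, whose coefficients agree with the derivatives of the analytic function $\tilde G_L$.
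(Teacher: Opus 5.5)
Your Steps 1--3 are the paper's proof. The paper likewise rewrites $\Psi$ as $(vy)^{-u[1-s(\lambda)]}\,{}_2F_0\bigl(u[1-s(\lambda)],u[1+s(\lambda)],-1/(vy)\bigr)$, expands each rising factorial with the Euler-operator device of Theorem~\ref{UP}, forms $\bar M_{n,m}$ by a Cauchy product, interchanges the $n$- and $m$-sums, and cancels $y^{u[1-s(\lambda)]}$. It treats the ${}_2F_0$ series as convergent and swaps the sums without comment.

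The obstacle you flag is not a weakness of your write-up; it is a real defect of the statement. Since $\tilde\Lambda_{n,0}=0$ and $\tilde\Lambda_{n,1}=-\tfrac{u}{2}(n-1)!$ for $n\ge1$, one gets $\bar l_1(y)=-\tfrac{au}{2}\sum_{n\ge1}(-1)^n\langle 2u\rangle_n/\bigl(n\,(vy)^n\bigr)$. This series diverges for every $y>0$ unless $\rho=-2u$ is a positive integer, and $\bar l_2(y)$ diverges even then. Hence \eqref{lbar} can hold only as an asymptotic expansion in $1/(vy)$. The actual coefficient is its Borel sum, e.g.\ $\bar l_1(y)=-\tfrac{au}{2}\int_0^\infty e^{-vyw}\,\frac{(1+w)^{\rho}-1}{w}\,dw$. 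With this value, $\bar l_1(L)-\bar l_1(x_0)$ reproduces exactly the classical scale--speed formula for $\mathbb{E}[T_L]$.

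Your integral-representation route is the right repair, with one adjustment. The formula you quote requires $\alpha=u[1-s(\lambda)]$ to have positive real part, which fails for $\lambda<0$. So first split $(1+w)^{\beta-\alpha-1}=1+\bigl[(1+w)^{\beta-\alpha-1}-1\bigr]$, with your $\beta=1-2u\,s(\lambda)$, to obtain
\[
(vy)^{\alpha}\,\Psi(\alpha,\beta,vy)=1+\frac{(vy)^{\alpha}}{\Gamma(\alpha)}\int_0^\infty e^{-vyw}\,w^{\alpha-1}\bigl[(1+w)^{\beta-\alpha-1}-1\bigr]\,dw ,
\]
which is valid whenever the real part of $\alpha$ exceeds $-1$. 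This representation gives three things:
\begin{enumerate}
\item analyticity of $\tilde G_L$ near $\lambda=0$, with $\tilde G_L(0,y)=1$ since $1/\Gamma(0)=0$;
\item the exact Taylor coefficients, by differentiating under the integral;
\item via Watson's lemma, their term-by-term agreement with \eqref{lbar} as asymptotic series.
\end{enumerate}
Your fallback of reading \eqref{lbar} as a formal identity in $\lambda$ does not help. The divergence occurs in $n$, i.e.\ in powers of $1/(vy)$, at each fixed order in $\lambda$.
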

\noindent
For the proof see Appendix B.
The 
quantities $\Lambda_{n,k}$, $\tilde{\Lambda}_{n,m}$, and $\bar{\Lambda}_{n,m}$ in \eqref{firstcoeff} and \eqref{IIcoeffdowncomp}
can be computed by making explicit
the action of the Euler operator in \eqref{firstcoeff}--\eqref{IIcoeffdowncomp}, as 
the following proposition shows. These results are useful in the numerical applications.
\begin{prop}
For $n,m,k \ge 1$, the quantities $\Lambda_{n,k}$, $\tilde{\Lambda}_{n,m}$, and $\bar{\Lambda}_{n,m}$ in \eqref{firstcoeff} and \eqref{IIcoeffdowncomp} admit the following explicit representations:
\begin{align}
\Lambda_{n,k}
&= \sum_{j=1}^n (-2u)^j  
\left[ \begin{matrix} n+1 \\ j+1 \end{matrix} \right] \left( \frac{j}{2} \right)_k,
\label{esplicita} \\
\tilde{\Lambda}_{n,m}
&= \sum_{k=1}^n A_{n,k} \left( \frac{k}{2} \right)_m,
\qquad
A_{n,k} = (-1)^k \sum_{j=k}^n \left[ \begin{matrix} n \\ j \end{matrix} \right] \binom{j}{k} u^j,
\label{esplicita1} \\
\bar{\Lambda}_{n,m}
&= \sum_{k=1}^n D_{n,k} \left( \frac{k}{2} \right)_m,
\qquad
D_{n,k} = \sum_{j=k}^n \left[ \begin{matrix} n \\ j \end{matrix} \right] \binom{j}{k} u^j.
\label{esplicita2}
\end{align}
Here, $\left[ \begin{matrix} n \\ j \end{matrix} \right]$ denote the unsigned Stirling numbers of the first kind.
\end{prop}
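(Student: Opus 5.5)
The plan is to use one observation: the falling factorial of the Euler operator acts diagonally on monomials. Since $\Theta t^j = j\,t^j$, we have $\tfrac{\Theta}{2} t^j = \tfrac{j}{2} t^j$. Hence $\left(\tfrac{\Theta}{2}\right)_k t^j = \tfrac{j}{2}\left(\tfrac{j}{2}-1\right)\cdots\left(\tfrac{j}{2}-k+1\right) t^j = \left(\tfrac{j}{2}\right)_k t^j$. By linearity, for any polynomial $P(t)=\sum_j c_j t^j$,
\[
\left.\left(\tfrac{\Theta}{2}\right)_k P(t)\right|_{t=1} = \sum_j c_j \left(\tfrac{j}{2}\right)_k .
\]
For $k\ge 1$ the constant term $j=0$ drops out, because $(0)_k=0$. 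So each of the three formulas reduces to writing the rising factorial $\langle\cdot\rangle_n$ appearing in \eqref{firstcoeff}--\eqref{IIcoeffdowncomp} as a polynomial in $t$ and reading off its coefficients.

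For this I would use the generating identity for unsigned Stirling numbers of the first kind, $\langle x\rangle_n = x(x+1)\cdots(x+n-1) = \sum_{j=0}^n \left[ \begin{matrix} n \\ j \end{matrix} \right] x^j$. For $\tilde\Lambda_{n,m}$, substitute $x=u(1-t)$, expand $(1-t)^j=\sum_{k=0}^j \binom{j}{k}(-1)^k t^k$, and exchange the order of summation. The coefficient of $t^k$ is then $(-1)^k\sum_{j=k}^n \left[ \begin{matrix} n \\ j \end{matrix} \right]\binom{j}{k}u^j = A_{n,k}$. Applying the observation above and discarding $k=0$ gives \eqref{esplicita1}. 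The case $\bar\Lambda_{n,m}$ is identical with $(1+t)^j$, which removes the sign $(-1)^k$ and yields $D_{n,k}$ in \eqref{esplicita2}.

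For $\Lambda_{n,k}$ the argument is $1-2ut$, so I would first use $\langle 1+y\rangle_n = (1+y)\cdots(n+y) = \langle y\rangle_{n+1}/y$. Expanding $\langle y\rangle_{n+1}=\sum_{i=1}^{n+1}\left[ \begin{matrix} n+1 \\ i \end{matrix} \right] y^i$ (the $i=0$ term vanishes since $\left[ \begin{matrix} n+1 \\ 0 \end{matrix} \right]=0$) and dividing by $y$ gives $\langle 1+y\rangle_n=\sum_{j=0}^{n}\left[ \begin{matrix} n+1 \\ j+1 \end{matrix} \right] y^j$. Setting $y=-2ut$ shows that the coefficient of $t^j$ is $(-2u)^j\left[ \begin{matrix} n+1 \\ j+1 \end{matrix} \right]$. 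Applying $(\Theta/2)_k$, evaluating at $t=1$, and dropping the $j=0$ term (killed since $k\ge1$) gives \eqref{esplicita}.

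The only step needing care is this shifted-argument identity for $\langle 1-2ut\rangle_n$, together with the bookkeeping of index ranges after exchanging sums; the rest is routine linearity. I would also state explicitly that $\langle\cdot\rangle_n$ denotes the rising factorial, consistent with its use in Theorems \ref{UP} and \ref{Down}, since the Stirling expansion depends on this convention.
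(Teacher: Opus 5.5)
Your proposal is correct and follows the paper's proof essentially verbatim. Both expand the rising factorials $\langle 1-2ut\rangle_n$ and $\langle u(1\mp t)\rangle_n$ via unsigned Stirling numbers of the first kind, apply $(\Theta/2)_k t^j=(j/2)_k t^j$ termwise, and evaluate at $t=1$. Your only additions are useful ones: you derive the shifted identity $\langle 1+y\rangle_n=\langle y\rangle_{n+1}/y$, which the paper simply quotes as well known, and you state explicitly that the index-zero terms vanish because $(0)_k=0$ for $k\ge 1$, which the paper leaves implicit when its sums start at $0$ but the stated formulas start at $1$.
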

\begin{proof}
Using the well-known identity
$
\langle x+1\rangle_n
=
\sum_{j=0}^{n}
\Big[\!\begin{matrix} n+1 \\ j+1 \end{matrix}\!\Big] x^j
$
we have
\begin{equation}
\langle 1-2u t\rangle_n 
=
\sum_{j=0}^{n}
\Big[\!\begin{matrix} n+1 \\ j+1 \end{matrix}\!\Big]
(-2u)^j t^j.
\label{expG}
\end{equation}
Since 
$\left(\frac{\Theta}{2}\right)_k (t^j)
=
\left(\frac{j}{2}\right)_k t^j,
$ with $\Theta = t \,\frac{d}{dt}$ the Euler operator, applying $\left(\frac{\Theta}{2}\right)_k$ to \eqref{expG} and evaluating at $t=1$ yields the expression for $\Lambda_{n,k}$ in \eqref{esplicita}.
Next, the identity
$
\langle x\rangle_n
=
\sum_{j=0}^n
\left[ \begin{matrix} n \\ j \end{matrix} \right] x^j
$
implies
\begin{equation}
\langle u(1-t)\rangle_n
=
\sum_{j=0}^n
\left[ \begin{matrix} n \\ j \end{matrix} \right] u^j (1-t)^j
=
\sum_{k=0}^n \left(
(-1)^k
\sum_{j=k}^n
\left[ \begin{matrix} n \\ j \end{matrix} \right]
\binom{j}{k}
u^j \right) t^k,
\label{expH}
\end{equation}
after expanding $(1-t)^j$ and rearranging the sums. Applying the operator $\left(\frac{\Theta}{2}\right)_m$ gives
\begin{equation}
\left(\frac{\Theta}{2}\right)_m \langle u(1-t)\rangle_n
=
\sum_{k=0}^n
\left(
(-1)^k
\sum_{j=k}^n
\left[ \begin{matrix} n \\ j \end{matrix} \right]
\binom{j}{k}
u^j \right)
\left(\frac{k}{2}\right)_m
t^k.
\label{expH1}
\end{equation}
Evaluating \eqref{expH1} at $t=1$ yields the expression for $\tilde{\Lambda}_{n,m}$ in  \eqref{esplicita1}. The expression for 
$\bar{\Lambda}_{n,m}$ in
\eqref{esplicita2} follows analogously by replacing $\langle u(1-t)\rangle_n$ with $\langle u(1+t)\rangle_n$.
\end{proof}
The following corollary  provides recursive formulas for the coefficients in the expansions \eqref{eq:LT_series} of $\tilde{g}_U(\lambda)$ and $\tilde{g}_L(\lambda)$,
from which the corresponding FPT moments follow.
\begin{corollary} \label{cor1}
Let $S \in \{U,L\}$, with $S=U$ if $x_0 < U$ and $S=L$ if $x_0 > L$. Then, for $m \ge 0$, the $m$-th moment of the FPT rv $T$ is 
$\mathbb{E}[T^m] = (-1)^m \tilde g_m,$ 
where $\tilde g_0 = 1$ and
\begin{equation}
\label{eq:recursion_gm}
\tilde g_m = s_m(x_0) - \sum_{k=1}^m \binom{m}{k} \, s_k(S)\, \tilde g_{m-k} 
\,\,\hbox{with} \,\,\,
s_m(y)= \left\{  
\begin{array}{cl}
t_m(y) & \hbox{if $S=U,$} \\
\bar{l}_m(y) & \hbox{if $S = L.$}
\end{array} \right.
\end{equation}
\end{corollary}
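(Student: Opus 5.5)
The plan is to treat the Laplace transform as a ratio of two power series in $\lambda$ and identify coefficients. By \eqref{GU} and \eqref{GL}, $\tilde g_S(\lambda)=G_S(\lambda,x_0)/G_S(\lambda,S)$. Rearranging gives
\[
\tilde g_S(\lambda)\,G_S(\lambda,S)=G_S(\lambda,x_0),
\]
valid in a neighbourhood of $\lambda=0$ where $G_S(\lambda,S)\neq 0$. In the upcrossing case, Theorem~\ref{UP} gives $G_U(\lambda,y)=\sum_{m\ge0}t_m(y)\lambda^m/m!$ with $t_0(y)=l_0(y)q_0=1$. Here $l_k(y)$ only contains the terms with $n\ge1$, so the constant term in $\lambda$ is $1$; this is the normalization implicit in Theorem~\ref{UP}.

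In the downcrossing case, Theorem~\ref{Down} gives $G_L(\lambda,y)=\tilde G_L(\lambda,y)\,v^{-u[1-s(\lambda)]}$. The prefactor $v^{-u[1-s(\lambda)]}$ does not depend on $y$, so it cancels in the ratio, and
\[
\tilde g_L(\lambda)=\frac{\tilde G_L(\lambda,x_0)}{\tilde G_L(\lambda,L)},\qquad \tilde G_L(\lambda,y)=1+\sum_{m\ge1}\bar l_m(y)\frac{\lambda^m}{m!}.
\]
Set $s_0:=1$ and $s_m:=t_m$ or $\bar l_m$ according to $S$. In both cases
\[
\tilde g_S(\lambda)\sum_{k\ge0}s_k(S)\frac{\lambda^k}{k!}=\sum_{m\ge0}s_m(x_0)\frac{\lambda^m}{m!}.
\]

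Next, write $\tilde g_S(\lambda)=\sum_m \tilde g_m\lambda^m/m!$ as in \eqref{eq:LT_series}. This expansion is legitimate because the quotient of two analytic functions, with nonvanishing denominator at $0$, is analytic near $0$. Multiply the two exponential generating series by the binomial (Cauchy) product rule and compare the coefficients of $\lambda^m/m!$:
\[
\sum_{k=0}^m\binom{m}{k}s_k(S)\,\tilde g_{m-k}=s_m(x_0).
\]
Isolating the $k=0$ term, whose coefficient is $s_0(S)=1$, gives \eqref{eq:recursion_gm}. At $m=0$ it gives $\tilde g_0=1$, which agrees with Remark~\ref{rem:LT_normalization}. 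Finally, \eqref{eq:coeffs_moments} converts the coefficients into moments, $\mathbb{E}[T^m]=(-1)^m\tilde g_m$.

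The main obstacle is justifying analyticity and nonvanishing of the denominator near $\lambda=0$, together with the normalization $s_0(S)=1$. For analyticity I would rely on Theorems~\ref{UP} and~\ref{Down}, which assert convergent expansions in a neighbourhood of $0$ for $\rho>0$. Nonvanishing at $\lambda=0$ follows because the constant term is $1$ by construction. If the normalizations in those theorems hide a common $y$-independent factor, as $v^{-u[1-s(\lambda)]}$ does in the downcrossing case, I would check that it cancels in the ratio, so the recursion is unaffected.
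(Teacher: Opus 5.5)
Your proposal is correct and follows the paper's proof essentially step for step. You clear the denominator, equate coefficients of $\lambda^m/m!$ in $G_S(\lambda,S)\,\tilde g_S(\lambda)=G_S(\lambda,x_0)$ using the normalization $s_0(\cdot)=1$, and in the downcrossing case first cancel the $y$-independent factor $v^{-u[1-s(\lambda)]}$ to reduce to $\tilde G_L(\lambda,x_0)/\tilde G_L(\lambda,L)$; your remark that this quotient is analytic near $\lambda=0$ because the denominator has constant term $1$ is a small justification the paper leaves implicit.
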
 
\begin{proof}
Let us first consider the upcrossing case $x_0 < S=U.$ By equating the coefficients of equal powers of $\lambda$ in $G_U(\lambda, U) \tilde{g}_U(\lambda)$ and $G_U(\lambda, x_0),$ and by recalling that $t_0(x_0)=t_0(U)=1,$ the upcrossing FPT Laplace transform
$\tilde g_U(\lambda)$ admits the power- series expansion \eqref{eq:LT_series} with coefficients 
$\{\tilde{g}_m\}$ given in \eqref{eq:recursion_gm} and ${s_m(y)}={t_m(y)}$ given in \eqref{0coeff}. For the downcrossing case $x_0 > S = L$, the result follows by observing that, upon substituting $G_L(\lambda,x_0)$ as given in  \eqref{raprGL} into the ratio \eqref{GL}, the downcrossing FPT Laplace transform can be written as
\begin{equation}
\tilde g_L(\lambda)
= \frac{\tilde{G}_L(\lambda,x_0)}{\tilde{G}_L(\lambda,L)}.
\label{GLII}
\end{equation}
Therefore, by repeating the same reasoning as in the upcrossing case, the coefficients $\{\tilde{g}_m\}$ of the power series expansion \eqref{eq:LT_series} of $\tilde g_L(\lambda)$ satisfy the recursion \eqref{eq:recursion_gm}, with $s_m(y)=\bar{l}_m(y)$ given in \eqref{lbar}.
\end{proof}

Recursion~\eqref{eq:recursion_gm} is particularly well suited for numerical implementation. By computing the reciprocals of $G_U(\lambda,U)$ in \eqref{GUps} (for $x_0 < U$) and $\tilde{G}_L(\lambda,L)$ in \eqref{raprGL} (for $x_0 > L$) via \eqref{reciprocochiuso}, and exploiting the Cauchy product of power series, we derive the following closed-form expressions for the FPT moments.
\begin{corollary}Under the same assumptions as in  Corollary \ref{cor1}, 
we have
\begin{equation}
\mathbb{E}[T^m] = (-1)^m \bigg[s_m(x_0) +\sum_{j=1}^m \binom{m}{j} s_{m-j}(x_0)
\left( \sum_{k=1}^j (-1)_k\, B_{j,k}\bigl(s_1(S), \ldots, s_{j-k+1}(S)\bigr) \bigg) \right],
\label{momFPT}
\end{equation}
where $\{B_{j,k}\}$ denote the partial exponential Bell polynomials defined in~\eqref{polinc}.
\end{corollary}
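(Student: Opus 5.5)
The plan is to read $\tilde g_S(\lambda)$ as a quotient of two power series, $G(\lambda,x_0)\cdot[G(\lambda,S)]^{-1}$. Here $G$ stands for $G_U$ in the upcrossing case and for $\tilde G_L$ in the downcrossing case, as in \eqref{GLII}. I would expand the reciprocal with exponential Bell polynomials and then take a Cauchy product with the numerator.

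\emph{Step 1: the two series.} By Theorem~\ref{UP} (respectively Theorem~\ref{Down}), in a neighbourhood of $\lambda=0$ both numerator and denominator have the form
\[
\sum_{m\ge0} s_m(y)\,\frac{\lambda^m}{m!}, \qquad s_0(y)=1.
\]
This uses $t_0=l_0q_0=1$ in the upcrossing case and the explicit constant term $1$ of $\tilde G_L$. So I write the denominator as $1+h(\lambda)$ with
\[
h(\lambda)=\sum_{j\ge1}s_j(S)\,\frac{\lambda^j}{j!},
\]
which is analytic and satisfies $h(0)=0$.

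\emph{Step 2: the reciprocal.} I apply Fa\`a di Bruno's formula to the composition $f\circ(1+h)$ with $f(z)=z^{-1}$, which is presumably the closed form referred to as \eqref{reciprocochiuso} in the appendix. The $k$-th derivative of $z^{-1}$ at $z=1$ is $(-1)^k k! = (-1)_k$, since the falling factorial gives $(-1)(-2)\cdots(-k)$. Hence
\[
\frac{1}{1+h(\lambda)}=1+\sum_{j\ge1}\frac{\lambda^j}{j!}\sum_{k=1}^j (-1)_k\,B_{j,k}\bigl(s_1(S),\ldots,s_{j-k+1}(S)\bigr).
\]
Near $0$ the denominator is nonzero because $1+h(0)=1$, so the composition is analytic and the expansion converges on a smaller disc.

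\emph{Step 3: the Cauchy product.} Call the coefficients of the reciprocal $c_0=1$ and $c_j=\sum_{k=1}^j(-1)_kB_{j,k}(\cdot)$. Multiplying the two exponential generating series gives
\[
\tilde g_m=\sum_{j=0}^m\binom{m}{j}s_{m-j}(x_0)\,c_j .
\]
Separating the $j=0$ term yields $s_m(x_0)$ plus the displayed sum. Then \eqref{eq:coeffs_moments}, together with Corollary~\ref{cor1} for the identification $\mathbb{E}[T^m]=(-1)^m\tilde g_m$, gives \eqref{momFPT}.

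As a consistency check, one can verify that this closed form solves the recursion \eqref{eq:recursion_gm}. That follows because $c_j$ is exactly the solution of $\sum_k\binom{j}{k}s_k(S)c_{j-k}=\delta_{j0}$.

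The main obstacle is bookkeeping rather than analysis. It lies in getting the sign and indexing conventions right: $(-1)_k$ with the falling-factorial convention, and matching the Bell-polynomial arguments $s_1,\ldots,s_{j-k+1}$ to the normalization of \eqref{polinc}. Analyticity in a common neighbourhood is already supplied by the two theorems and Remark~\ref{rem:LT_normalization}.
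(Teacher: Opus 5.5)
Your proposal is correct and follows essentially the same route as the paper. The paper also computes the coefficients of $1/G_U(\lambda,U)$ (respectively $1/\tilde G_L(\lambda,L)$) from the Bell-polynomial closed form \eqref{reciprocochiuso}, which with $a_0=s_0(S)=1$ is exactly your Fa\`a di Bruno computation for $f(z)=z^{-1}$, and then takes the Cauchy product with the numerator series evaluated at $x_0$.
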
 


\section{FPT cumulants} 
From \eqref{eq:LT_series} the logarithm of the Laplace transform is
well defined as a formal power series and the cumulants of $T$ can be recovered
accordingly.
In particular, they are given by
\begin{equation}
\label{eq:coeffs_cumulants}
c_k(T)
=
(-1)^k\,\tilde c_k,
\qquad k \ge 1,
\end{equation}
where $\{\tilde c_k\}_{k\ge1}$ are defined through the expansion
\begin{equation}
\label{eq:formal_log}
\log \tilde g(z)
=
\sum_{k \geq 1}  \tilde c_k\,\frac{z^k}{k!}.
\end{equation}

While the relations between moments and cumulants are well known -either in recursive form or via Bell polynomials \citep{DOM2024} -the ratio structure of the Laplace transforms considered here yields an explicit formulation.
Indeed, taking logarithms in \eqref{GU} and \eqref{GLII}, we have
\begin{equation}
\log \tilde g_U(\lambda)
=
\log G_U(\lambda,x_0) - \log G_U(\lambda,U),
\qquad
\log \tilde g_L(\lambda)
=
\log \tilde{G}_L(\lambda,x_0) - \log \tilde{G}_L(\lambda,L).
\label{logdiff}
\end{equation}
Accordingly, the coefficients in the power series expansions of
$\log \tilde g_U(\lambda)$ and $\log \tilde g_L(\lambda)$ are given by the
differences of the corresponding coefficients in the expansions of
$\log G_U(\lambda,\cdot)$ and $\log \tilde{G}_L(\lambda,\cdot)$, evaluated at $x_0$ and at the threshold respectively. This observation is used in the proof of the following theorem.
\begin{theorem}[Upcrossing]
For $k \ge 1$, the $k$-th upcrossing FPT  cumulant is given by
\begin{equation}
\label{eq:cumulantU}
c_k(T_U)
= (-1)^k
\left[
u \log\!\left(\frac{U}{x_0}\right)
\left(\frac{1}{2}\right)_k \left( \frac{2}{\sigma^2 u^2} \right)^k
+ \tilde c_k(x_0) - \tilde c_k(U)
\right],
\end{equation}
where $u$ is defined in~\eqref{parameters}, and
$
\tilde c_k(y) = L_k[l_1(y), \ldots, l_k(y)],
$
with $\{L_k\}$ the logarithmic polynomials  in~\eqref{logpoly}, and $\{l_k(y)\}$ given in~\eqref{0coeff}.
\end{theorem}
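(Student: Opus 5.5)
The plan is to take logarithms in the factorisation of $G_U$ and to exploit the difference structure \eqref{logdiff}. From \eqref{GU}, for $y>0$,
\[
\log G_U(\lambda,y) = u[1-s(\lambda)]\log y + \log \Phi\big(u[1-s(\lambda)],\,1-2u\,s(\lambda),\,vy\big).
\]
We write $F(\lambda,y):=\Phi(u[1-s(\lambda)],1-2us(\lambda),vy)$. In the proof of Theorem~\ref{UP} (Appendix B), $G_U$ is factored as $y^{u[1-s(\lambda)]}$ times $F$. The first factor has exponential generating coefficients $\{q_k\}$, and the second has coefficients $\{l_k(y)\}$ with $l_0(y)=1$; this is why $t_m$ in \eqref{0coeff} is their binomial convolution.

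First, we use $\log y^{u[1-s(\lambda)]}=u(1-s(\lambda))\log y$, which is linear in $\log y$. Expanding $s(\lambda)=\sqrt{1+a\lambda}=\sum_{k\ge0}\binom{1/2}{k}(a\lambda)^k$, the $k$-th exponential coefficient of $1-s(\lambda)$ is $-k!\binom{1/2}{k}a^k=-\left(\tfrac12\right)_k a^k$ for $k\ge1$. The contribution of this factor to $\log\tilde g_U$ is therefore
\[
u\log(x_0/U)\,\big[-(\tfrac12)_k a^k\big] = u\log(U/x_0)\,(\tfrac12)_k a^k ,
\qquad a=\tfrac{2}{\sigma^2u^2}.
\]
This matches the first term of \eqref{eq:cumulantU}. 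As a consistency check, the recursion for $q_k$ in \eqref{0coeff} is exactly the exponential (Bell-type) recursion for $\exp(-u\log y\,(s(\lambda)-1))$, so this identification agrees with Theorem~\ref{UP}.

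Second, we handle the hypergeometric factor. Since $F(\lambda,y)=\sum_{m\ge0}l_m(y)\lambda^m/m!$ with $l_0(y)=1$, the standard formal-power-series relation between an exponential generating function with unit constant term and its logarithm applies. It gives $\log F(\lambda,y)=\sum_{k\ge1}L_k[l_1(y),\dots,l_k(y)]\lambda^k/k!$, with $L_k=\sum_j(-1)^{j-1}(j-1)!B_{k,j}$ the logarithmic polynomials of \eqref{logpoly}. Hence $\tilde c_k(y)=L_k[l_1(y),\ldots,l_k(y)]$.

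Third, we subtract the evaluations at $y=x_0$ and $y=U$ as in \eqref{logdiff}. This gives the $k$-th coefficient of $\log\tilde g_U$, and \eqref{eq:coeffs_cumulants} supplies the factor $(-1)^k$.

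The main point needing care is the legitimacy of the logarithm. We need $F(\lambda,y)$ to be analytic and nonvanishing near $\lambda=0$ for $y\in\{x_0,U\}$, so that the formal identity is an identity of convergent series. At $\lambda=0$ we have $s=1$, so $F(0,y)=\Phi(0,1-2u,vy)=1$, because $u<0$ makes $1-2u>1$. Analyticity near $0$ is already part of Theorem~\ref{UP}, and continuity then gives nonvanishing in a neighbourhood of $\lambda=0$. The remaining step is to confirm from Appendix B that the coefficients $l_k(y)$ are indeed those of $F$ alone, separate from the power factor, which is how \eqref{0coeff} is structured.
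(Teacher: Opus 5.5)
Your proposal is correct and follows the paper's proof essentially step by step. You split $\log G_U$ into the power factor, expanded through $1-s(\lambda)$, and $\log\Phi$, written via the logarithmic polynomials in the $l_k(y)$ of \eqref{expT}; you then subtract the evaluations at $x_0$ and $U$ using \eqref{logdiff}, and your check that $F(0,y)=1$ (so the logarithm is legitimate near $\lambda=0$) is a small justification the paper leaves implicit.
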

\begin{proof}
Take the logarithm of $G_U(\lambda,y)$ in \eqref{GU}. 
By expanding $1-s(\lambda) = 1-(1 + a \lambda)^{1/2}, $ 
we recover
\begin{equation} 
\log y^{u(1-s(\lambda))} = - u \log y \sum_{k \ge 1} \left( 
\frac{1}{2} \right)_k \left( \frac{2}{\sigma^2 u^2} \right)^k \frac{\lambda^k}{k!}.
\label{exp1}
\end{equation}
From \eqref{ap:phiexpansion}, \eqref{expT} and \eqref{logexpansion} we have 
\begin{equation} 
\log \Phi\!\big(u[1-s(\lambda)],\,1-2u\,s(\lambda),\,v y\big) = \sum_{k\ge 1} L_k[l_1(y), \dots, l_k(y)]\,
\frac{\lambda^k}{k!}
\end{equation}
and the result follows from 
the first equation in \eqref{logdiff}. 
\end{proof}
By analogous arguments applied to $\log \tilde{G}_L(\lambda,y)$ in \eqref{GLII}, the downcrossing FPT cumulants can likewise be expressed in terms of logarithmic polynomials, as stated in the following theorem.
\begin{theorem}[Downcrossing]
For $k \ge 1$, the $k$-th downcrossing FPT  cumulant is given by
\begin{equation}
\label{eq:cumulantD}
c_k(T_L)
= (-1)^k
\left[\bar c_k(x_0) - \bar c_k(L)
\right],
\end{equation}
where $\bar c_k(y)_=L_k[\bar{l}_1(y), \ldots, \bar{l}_k(y)]$ with $\{L_k\}$ the logarithmic polynomials introduced in~\eqref{logpoly}, and $\{\bar{l}_k(y)\}$ given in \eqref{lbar}.
\end{theorem}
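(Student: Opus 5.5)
The plan mirrors the upcrossing proof and works from the reduced representation \eqref{GLII}, $\tilde g_L(\lambda)=\tilde G_L(\lambda,x_0)/\tilde G_L(\lambda,L)$. Here the prefactor $v^{-u[1-s(\lambda)]}$ has already cancelled between numerator and denominator. The power $y^{u[1-s(\lambda)]}$ has likewise been absorbed: by Theorem~\ref{Down}, the normalized Tricomi factor $(vy)^{u[1-s(\lambda)]}\Psi(\cdot,\cdot,vy)$ is exactly $\tilde G_L(\lambda,y)$. For this reason no analogue of the explicit logarithmic term $u\log(U/x_0)(\tfrac12)_k a^k$ appearing in \eqref{eq:cumulantU} should survive. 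Taking logarithms gives the second identity in \eqref{logdiff}, so the cumulant coefficients are $\tilde c_k=[\lambda^k/k!]\log\tilde G_L(\lambda,x_0)-[\lambda^k/k!]\log\tilde G_L(\lambda,L)$. Then \eqref{eq:coeffs_cumulants} gives $c_k(T_L)=(-1)^k\tilde c_k$.

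The key step is to expand $\log\tilde G_L(\lambda,y)$ for fixed $y$. By Theorem~\ref{Down}, $\tilde G_L(\lambda,y)=1+\sum_{m\ge1}\bar l_m(y)\lambda^m/m!$ is analytic near $\lambda=0$ and equals $1$ at $\lambda=0$. Hence $\log\tilde G_L$ is well defined as a convergent (a fortiori formal) power series in a neighbourhood of the origin. I would then apply the composition formula \eqref{logexpansion} with the appendix's logarithmic polynomials \eqref{logpoly}: if $F(\lambda)=1+\sum_{m\ge1}f_m\lambda^m/m!$, then
\[
\log F(\lambda)=\sum_{k\ge1}L_k[f_1,\dots,f_k]\frac{\lambda^k}{k!},\qquad L_k=\sum_{j=1}^k(-1)^{j-1}(j-1)!\,B_{k,j}.
\]
This is Fa\`a di Bruno applied to $\log(1+x)$. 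Setting $f_m=\bar l_m(y)$ yields $\bar c_k(y)=L_k[\bar l_1(y),\dots,\bar l_k(y)]$. Subtracting the expansions at $y=x_0$ and $y=L$ and multiplying by $(-1)^k$ gives \eqref{eq:cumulantD}.

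The main obstacle is justifying that the single-variable series manipulations are legitimate. Specifically, I need $\tilde G_L(\lambda,L)\neq0$ near $\lambda=0$, which holds since it equals $1$ at $0$ and is continuous there. This also makes the moment recursion of Corollary~\ref{cor1} and the logarithm consistent on a common disc. I also need to check that no hidden $\lambda$-dependent power of $y$ remains in $\tilde G_L$. I would address this by going back to the proof of Theorem~\ref{Down} in Appendix B and confirming that the Tricomi asymptotic series $\Psi(\alpha,\beta,z)\sim z^{-\alpha}\sum_n\langle\alpha\rangle_n\langle\alpha-\beta+1\rangle_n(-z)^{-n}/n!$, combined with the prefactor $y^{\alpha}$, produces exactly $v^{-\alpha}\tilde G_L$. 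Here $\alpha=u[1-s]$ and $\alpha-\beta+1=u(1+s)$, which accounts for the factors $\tilde\Lambda$ and $\bar\Lambda$. Granting that representation, as permitted, the remaining argument is purely the formal log-composition above.
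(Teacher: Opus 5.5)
Your proposal is correct and follows the paper's route. The paper's proof is only the remark that ``analogous arguments'' to the upcrossing case apply to $\log\tilde G_L(\lambda,y)$ in \eqref{GLII}; you make this explicit: take the second identity in \eqref{logdiff}, expand $\log\tilde G_L(\lambda,y)$ via \eqref{logexpansion}--\eqref{logpoly} with $a_0=1$ and $a_m=\bar l_m(y)$, then subtract the coefficients at $x_0$ and $L$. You also correctly observe that no explicit $\log$ term survives, because $y^{u[1-s(\lambda)]}$ cancels against the $(vy)^{-u[1-s(\lambda)]}$ coming from the Tricomi representation.
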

In particular, since 
$c_1(T_U) = {\mathbb E}[T_U]$ and $c_1(T_L) = {\mathbb E}[T_L],$ the upcrossing and downcrossing FPT means are respectively 
\begin{eqnarray*}
{\mathbb E} [T_U] & = & \frac{1}{u \sigma^2}
\log \frac{x_0}{U} + \frac{2}{\sigma^2 u^2} \sum_{n \geq 0} \frac{\tilde{\Lambda}_{n,1} \Lambda_{n,0} - \tilde{\Lambda}_{n,0} \Lambda_{n,1}}{\Lambda^2_{n,0}} \frac{[(vU)^n - (vx_0)^n]}{n!} \\
\quad {\mathbb E} [T_L] & = &  \frac{2}{\sigma^2 u^2} \sum_{n \geq 1} (-1)^n \frac{\tilde{\Lambda}_{n,0} \bar \Lambda_{n,1} + \tilde{\Lambda}_{n,1} \bar \Lambda_{n,0} }{n!}\bigg[\frac{1}{(vL)^n} - \frac{1}{(vx_0)^n}\bigg].
\end{eqnarray*}
We also have $c_2(T_U)={\rm Var}(T_U)$ and 
$c_2(T_L)={\rm Var}(T_L).$ Therefore
the upcrossing and downcrossing FPT variances are respectively
\begin{eqnarray*}
{\rm Var}(T_U) \!\!\!\! & = &  \!\!\!\!
\frac{\log(x_0/U)}{\sigma^4 u^3}
+ \frac{4}{\sigma^4 u^4} \left\{ \sum_{n \geq 0}
\frac{
\tilde{\Lambda}_{n,2}\Lambda_{n,0}^2
-2\tilde{\Lambda}_{n,1}\Lambda_{n,1}\Lambda_{n,0}
-\tilde{\Lambda}_{n,0}\Lambda_{n,2}\Lambda_{n,0}
+2\tilde{\Lambda}_{n,0}\Lambda_{n,1}^2
}{\Lambda_{n,0}^3}
\bigg( \frac{(vx_0)^n}{n!} \right. \\
& - & \!\!\!\! \frac{(vU)^n}{n!} \bigg) + \left. \left( \sum_{n \geq 0} \frac{\tilde{\Lambda}_{n,1} \Lambda_{n,0} - \tilde{\Lambda}_{n,0} \Lambda_{n,1}}{\Lambda^2_{n,0}} \frac{(vU)^n}{n!}\right)^2 - \left( \sum_{n \geq 0} \frac{\tilde{\Lambda}_{n,1} \Lambda_{n,0} - \tilde{\Lambda}_{n,0} \Lambda_{n,1}}{\Lambda^2_{n,0}} \frac{(vx_0)^n}{n!}\right)^2 \right\} \\
 {\rm Var}(T_L) \!\!\!\! & = & \!\!\!\!
 \frac{4}{\sigma^4 u^4} \left\{ \sum_{n\ge 1}\frac{(-1)^n}{n!}
\left(
\frac{1}{(v x_0)^n}
-
\frac{1}{(v L)^n}
\right)
\left(
\tilde\Lambda_{n,0}\bar\Lambda_{n,2}
+2\,\tilde\Lambda_{n,1}\bar\Lambda_{n,1}
+\tilde\Lambda_{n,2}\bar\Lambda_{n,0}
\right) \right. \\
& + & \left.
 \bigg(\sum_{n \geq 1}\frac{\tilde{\Lambda}_{n,0} \bar \Lambda_{n,1} + \tilde{\Lambda}_{n,1} \bar \Lambda_{n,0} }{n!}\frac{(-1)^n}{(vL)^n} \bigg)^2 - 
\bigg( \sum_{n \geq 1}  \frac{\tilde{\Lambda}_{n,0} \bar \Lambda_{n,1} + \tilde{\Lambda}_{n,1} \bar \Lambda_{n,0} }{n!}\frac{(-1)^n}{(vx_0)^n} \bigg)^2 \right\}
\end{eqnarray*}
where $\Lambda_{n,0}=\langle 1 - 2 u \rangle_n$ from \eqref{firstcoeff}.
\section{Laguerre-Gamma density approximation}
Consider the Gamma pdf with scale parameter $\alpha+1>0$ and rate parameter $\beta>0$,
\begin{equation}
f_{\alpha,\beta}(t)
= \beta (\beta t)^{\alpha} \frac{e^{-\beta t}}{\Gamma(\alpha+1)},
\qquad t>0,
\label{gammapdf}
\end{equation}
and let $g(t)$ denote the FPT pdf. If $\beta < 2/\mathbb{E}[T]$ and
$
g(t) = o(t^{\delta})$ as $t \to 0,
$
for some $\delta > \alpha/2 + 1$, then $g(t)$ admits a Fourier--Laguerre expansion in terms of the complete orthonormal system of generalized Laguerre polynomials
\begin{equation}
L_k^{(\alpha)}(t)
=
\frac{\Gamma(\alpha+1+k)}{k!}
\sum_{j=0}^{k}
\binom{k}{j}
\frac{(-t)^j}{\Gamma(\alpha+j+1)},
\qquad k \ge 1,
\end{equation}
with $L_0^{(\alpha)}(t)=1$. A truncated expansion of order $n$ yields the approximant
\begin{equation}
\hat{g}_n(t)
=
\beta (\beta t)^{\alpha} e^{-\beta t} \bigg( 1 + \sum_{k=1}^n \mathfrak{B}_k^{(\alpha)}
L_k^{(\alpha)}(\beta t) \bigg),
\quad \hbox{
where} \,\,
\mathfrak{B}_k^{(\alpha)}
=
\sum_{j=0}^{k}
\binom{k}{j}
\frac{(-\beta)^j \, \mathbb{E}[T^j]}
{\Gamma(\alpha+j+1)}.
\label{explaguerre} 
\end{equation}
Note that the approximation \eqref{explaguerre} applies also when the FPT moments are not available in closed-form. Given a sample of FPT observations, the required moments can be estimated and substituted into \eqref{explaguerre}, yielding an orthogonal series density estimator with respect to the Gamma reference measure. Both this estimator and the associated approximation are discussed in~\cite{DOM2024}, where also convergence properties, truncation criteria, and numerical refinements
of \eqref{explaguerre} are analyzed in detail. Here, we restrict attention to the aspects relevant to the present setting.

By the orthogonality of generalized Laguerre polynomials, $\hat g_n(t)$ is normalized and this property is used
to determine the truncation order $n$ in \eqref{explaguerre}.
The parameters $\alpha$ and $\beta$ in \eqref{gammapdf} 
are chosen by moment matching,
\begin{equation}
\alpha = \frac{(\mathbb{E}[T])^2}{\mathrm{Var}(T)} - 1,
\qquad
\beta = \frac{\mathbb{E}[T]}{\mathrm{Var}(T)}.
\label{mm}
\end{equation}
With this choice, the first two coefficients
of the Fourier-Laguerre expansion vanish and
$\alpha + 1 = c_v^{-2},$ where
$c_v = \sqrt{\mathrm{Var}(T)}/\mathbb{E}[T]$ is the coefficient of
variation. Hence large dispersion ($c_v>1$) implies $\alpha<0$,
for which the Gamma pdf is singular at the origin and strongly concentrated near zero, potentially leading to numerical instability, especially for heavy-tailed FPT distributions. This issue should therefore be carefully assessed
before applying the approximation.
Alternative choices of $\alpha$ and $\beta$ were examined,
but no systematic improvement was observed.  

The use of the Laguerre-Gamma expansion can be further supported by  the FPT cumulants. For a Gamma rv $X$,
we have 
$c_{k+1}(X)/c_k(X) = k/\beta,$  for $k \ge 1.$ Thus, when the ratios between the FPT cumulants follow a similar law, this is indicative of consistency with a Gamma reference and supports the  Laguerre-Gamma approximation \eqref{explaguerre}.

 Since $\hat g_n(t)$ is expressed as the product of $f_{\alpha,\beta}(t)$ and a polynomial $p_n$ of degree $n$, its sign is entirely determined by $p_n$. Therefore positivity of $\hat g_n(t)$ is not guaranteed. Sufficient conditions on the constant and leading coefficients of $p_n$ are included in the stopping rules to ensure non-negativity near the origin and in the tails, \cite{DOM2024}. Moreover, when necessary, a local correction is applied to restore positivity, consistent with the theoretical power-law behavior near zero and exponential decay in the tail. In the numerical experiments considered here, such corrections have a negligible impact.
\subsection{Numerical results}

In this section, we present numerical results for the approximation of the FPT pdf $g(t)$ of the logistic model via $\hat{g}_n(t)$ in~\eqref{explaguerre}. Since the shape of the FPT pdf is unknown, the validity of the proposed approximation is assessed by comparison with alternative estimates obtained through different techniques.

As noted in the introduction, the \texttt{fptdApprox} package for one-dimensional diffusion processes cannot be applied in this case. Therefore, Monte Carlo simulation is adopted as the reference method to evaluate the accuracy of the proposed approximation.
This method consists in simulating sample paths of $X(t)$ and extracting the corresponding
FPT instances to the threshold.
The simulation is performed using a Lie-Trotter splitting of the generators associated with the drift and diffusion operators, which preserves the positivity of the process (see, e.g., \cite{BensoussanGlowinskiRascanu1992}). The scheme separates the deterministic drift and stochastic diffusion components in
${\rm d} X(t) = a[X(t)] \,{\rm d}t + \sigma X(t) \, {\rm d} W(t).$
Over a time step $\Delta t$, the drift component $\dot{x} = a(x)$ is integrated exactly, yielding the intermediate value
$
X_n^* = \Phi_{\Delta t}(X_n),
$
where $\Phi_{\Delta t}$ denotes the flow associated with the deterministic equation.
The diffusion component
${\rm d} X(t) = \sigma X(t) \, {\rm d} W(t)$
corresponds to a geometric Brownian motion without drift and admits an explicit solution. The numerical update is then obtained by composing the two flows:
$$
X_{n+1}
=
X_n^*
\exp\!\left(
-\frac{1}{2}\sigma^2 \Delta t
+
\sigma \sqrt{\Delta t}\, Z_n
\right),
\qquad Z_n \sim \mathcal{N}(0,1).
$$
Accurate estimation requires a large number of trajectories and a
sufficiently small time step, which substantially increases the
computational cost.
Furthermore, since not all simulated paths reach the threshold within a
finite simulation horizon, the FPT pdf may be underestimated.
The computational burden becomes particularly pronounced for large
coefficients of variation, as trajectories must be simulated over
longer time intervals. 

For all figures, the dashed curve represents the kernel density estimate of the FPT pdf obtained from Monte Carlo simulations. The solid curve corresponds to the Laguerre–Gamma approximation \eqref{explaguerre} based on theoretical moments, while the dotted curve is obtained from \eqref{explaguerre} replacing theoretical moments with empirical moments computed from the same sample
obtained with the Monte Carlo method. The {\tt R} procedures are available from the authors upon request.

\begin{figure}[H]
\centering

\begin{subfigure}{0.48\linewidth}
    \centering
    \includegraphics[width=\linewidth]{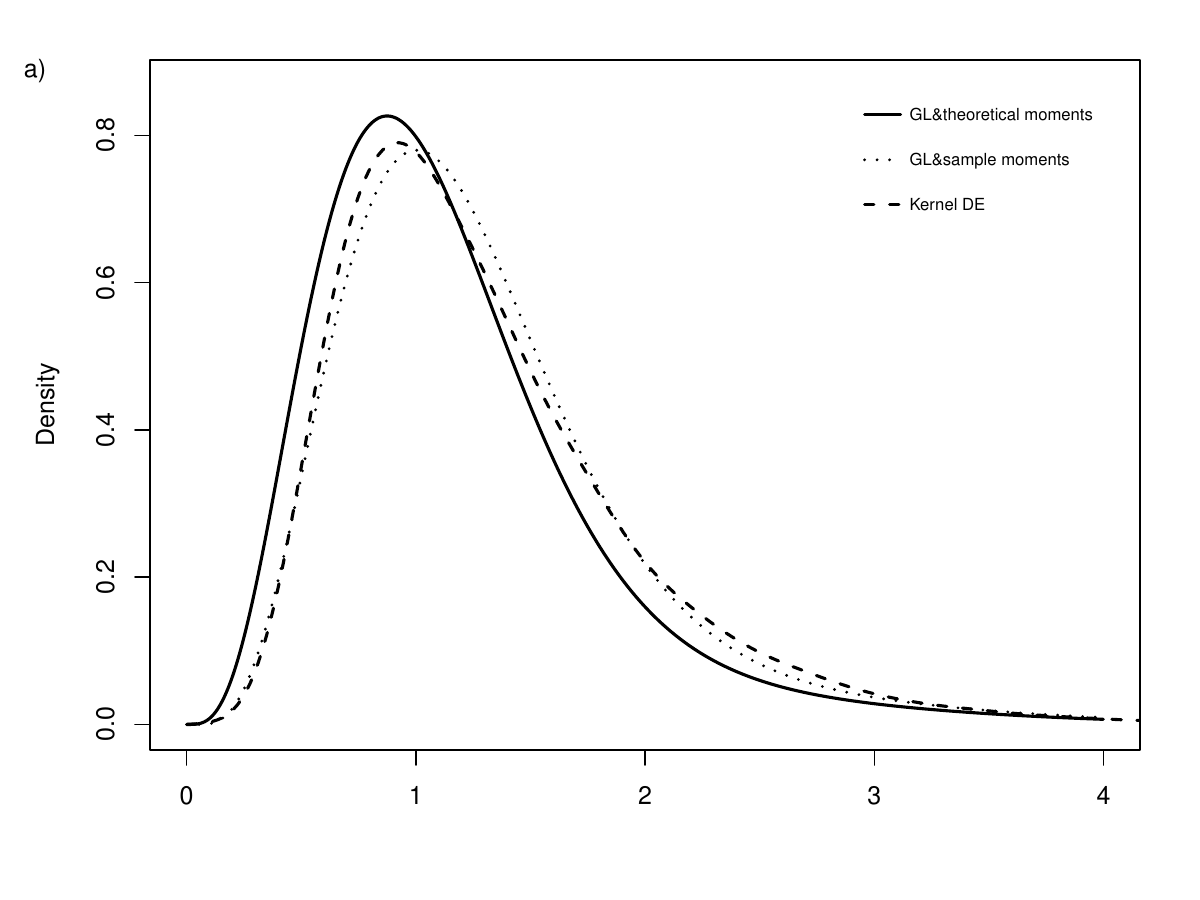}
\end{subfigure}
\hfill
\begin{subfigure}{0.48\linewidth}
    \centering
    \includegraphics[width=\linewidth]{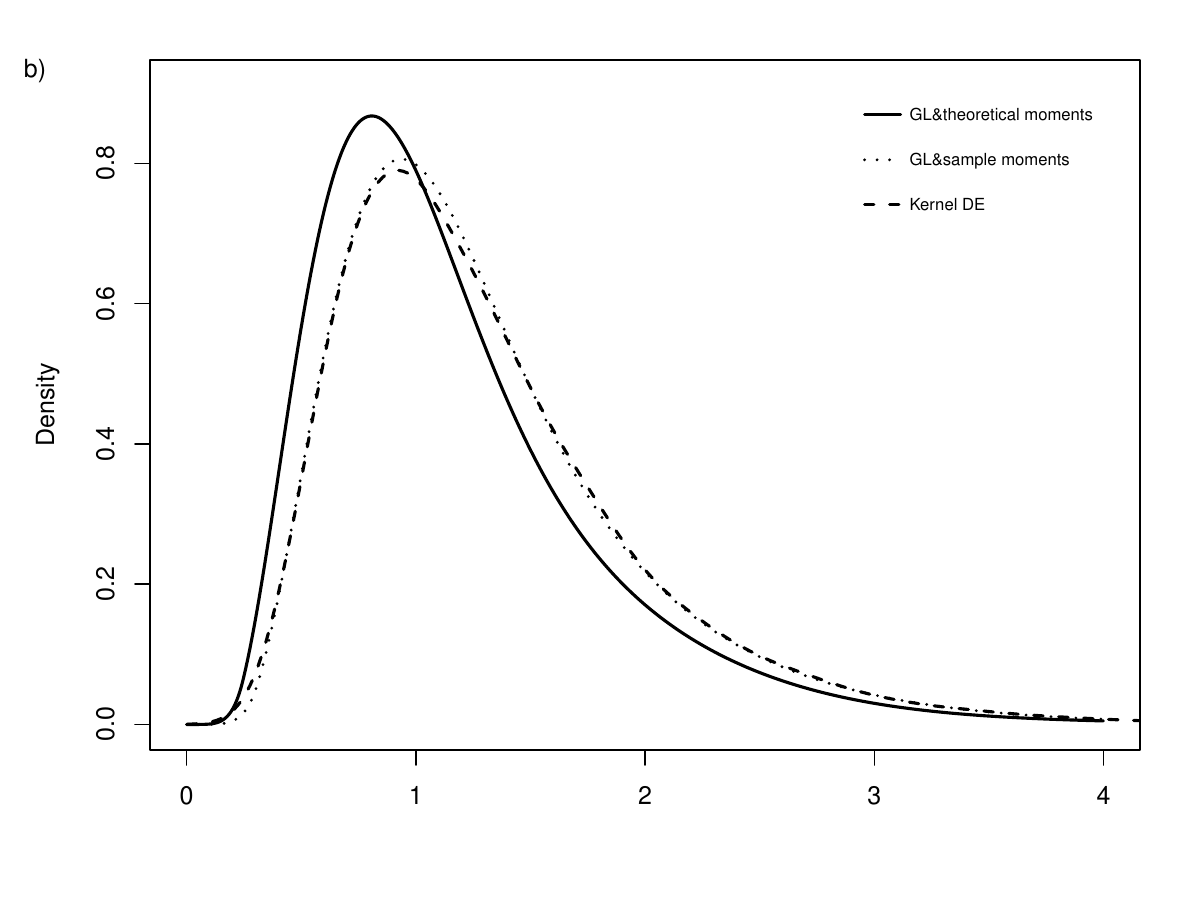}
\end{subfigure}

\caption{Plots of approximated FPT pdfs for $x_0 = 100, \, U = 150, \, \alpha = 2.50, \, \beta = 2.98, \, c_{v} = 0.53$ with $n=4$ in a) and $n=8$ in b), same line styles as described above.}
\label{Fig1}
\end{figure}

\begin{figure}[H]
\centering

\begin{subfigure}{0.48\linewidth}
    \centering
    \includegraphics[width=\linewidth]{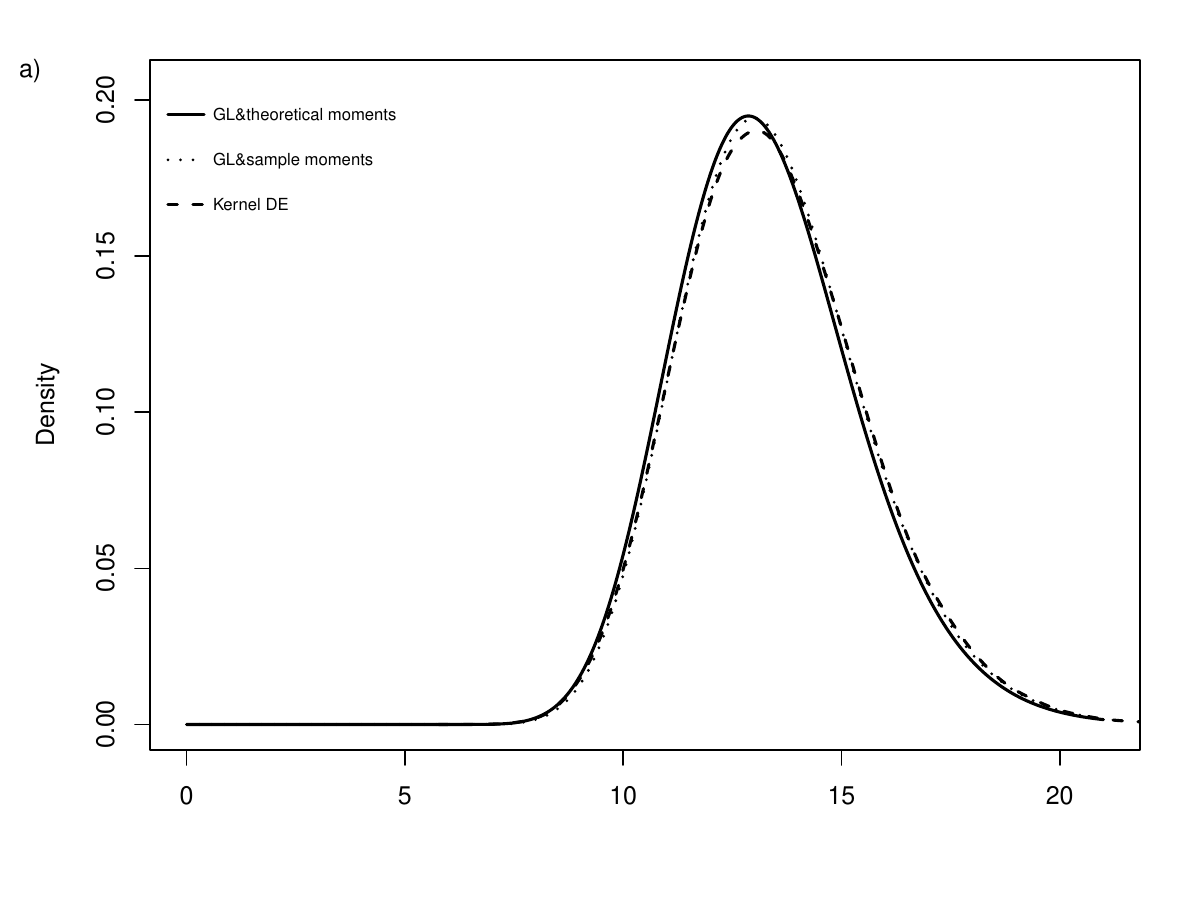}
\end{subfigure}
\hfill
\begin{subfigure}{0.48\linewidth}
    \centering
    \includegraphics[width=\linewidth]{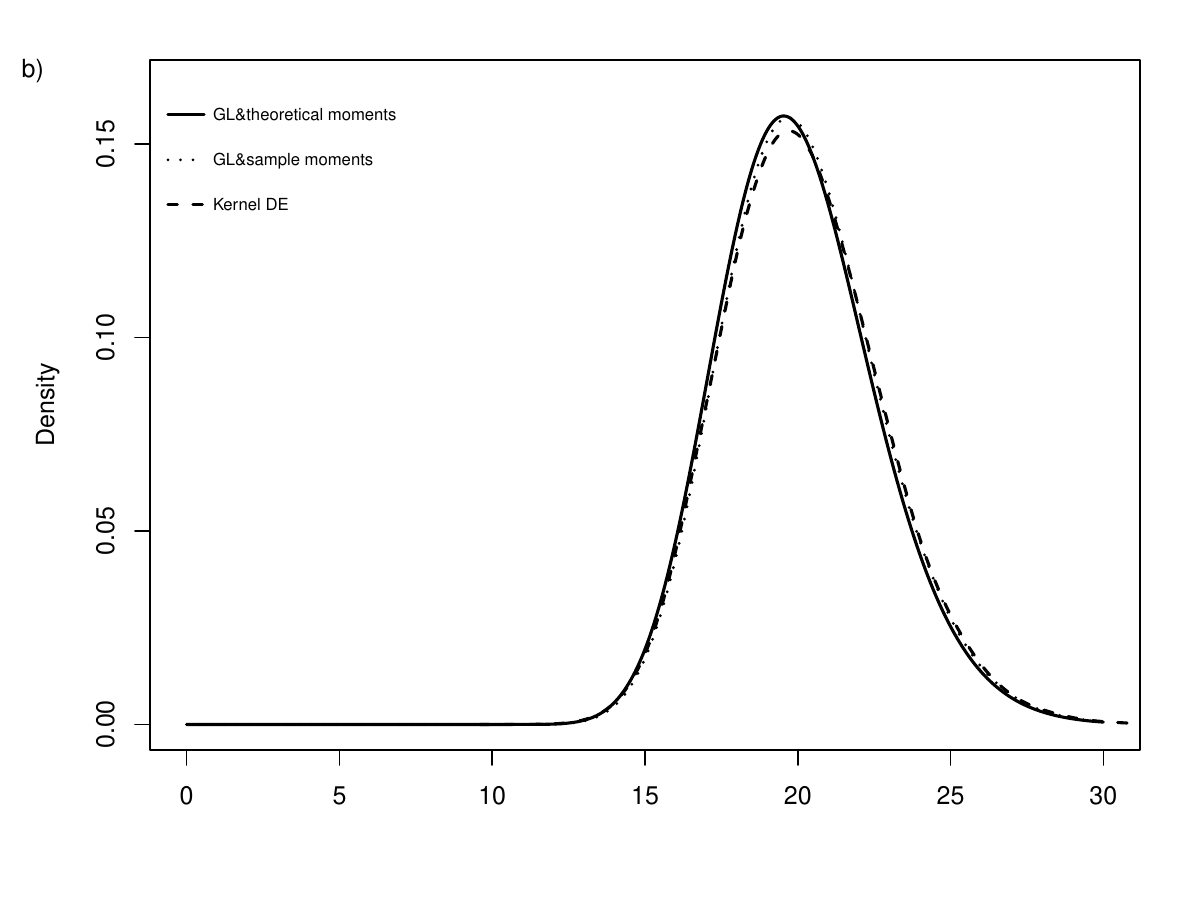}
\end{subfigure}

    \caption{Plots of approximated FPT pdfs for $x_0 = 100, \, U = 10^4, \, \alpha = 38.72, \, \beta = 2.98, \, c_v = 0.16, n=4$ in a) and $x_0 = 100, U = 10^5, \alpha = 58.56, \beta = 2.97, \, c_v = 0.13$, $n=4$ in b), same line styles as described above.} \label{Fig2}
\end{figure}

For moderate dispersion ($c_v<1$), as in Figs~\ref{Fig1} and~\ref{Fig2}, the Laguerre-Gamma approximation exhibits excellent
agreement with the Monte Carlo benchmark.
In this regime, $\alpha>0$ and the reference Gamma pdf is smooth
at the origin.
Both the theoretical-moment and sample-moment implementations
essentially overlap with the Monte Carlo curve, indicating numerical
stability and robustness with respect to moment estimation.
Moreover, the cumulant ratios remain approximately constant (see Table \ref{Tab1}),
consistent with the Gamma cumulant structure.

\begin{figure}[H]
\centering

\begin{subfigure}{0.45\textwidth}
    \centering
    \includegraphics[width=\linewidth]{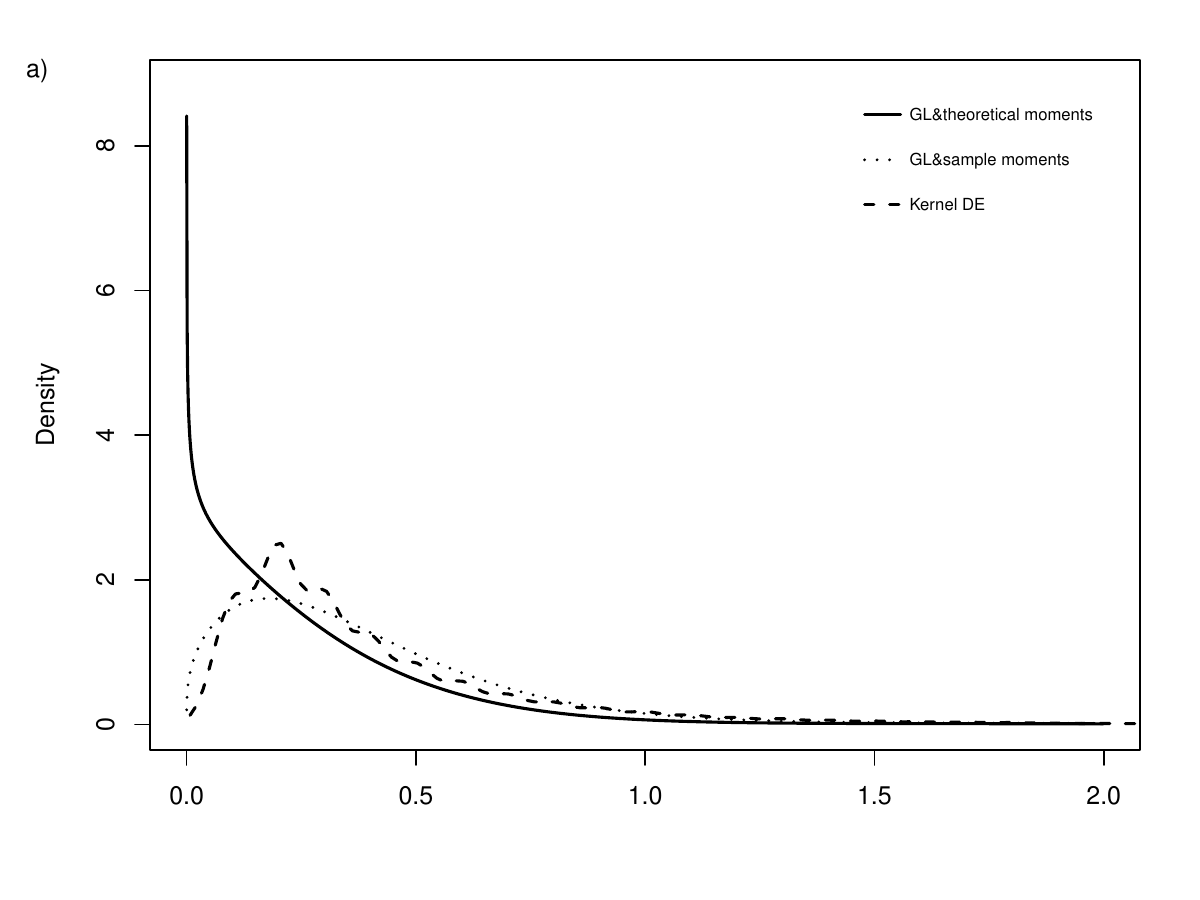}
\end{subfigure}
\hfill
\begin{subfigure}{0.45\textwidth}
    \centering
    \includegraphics[width=\linewidth]{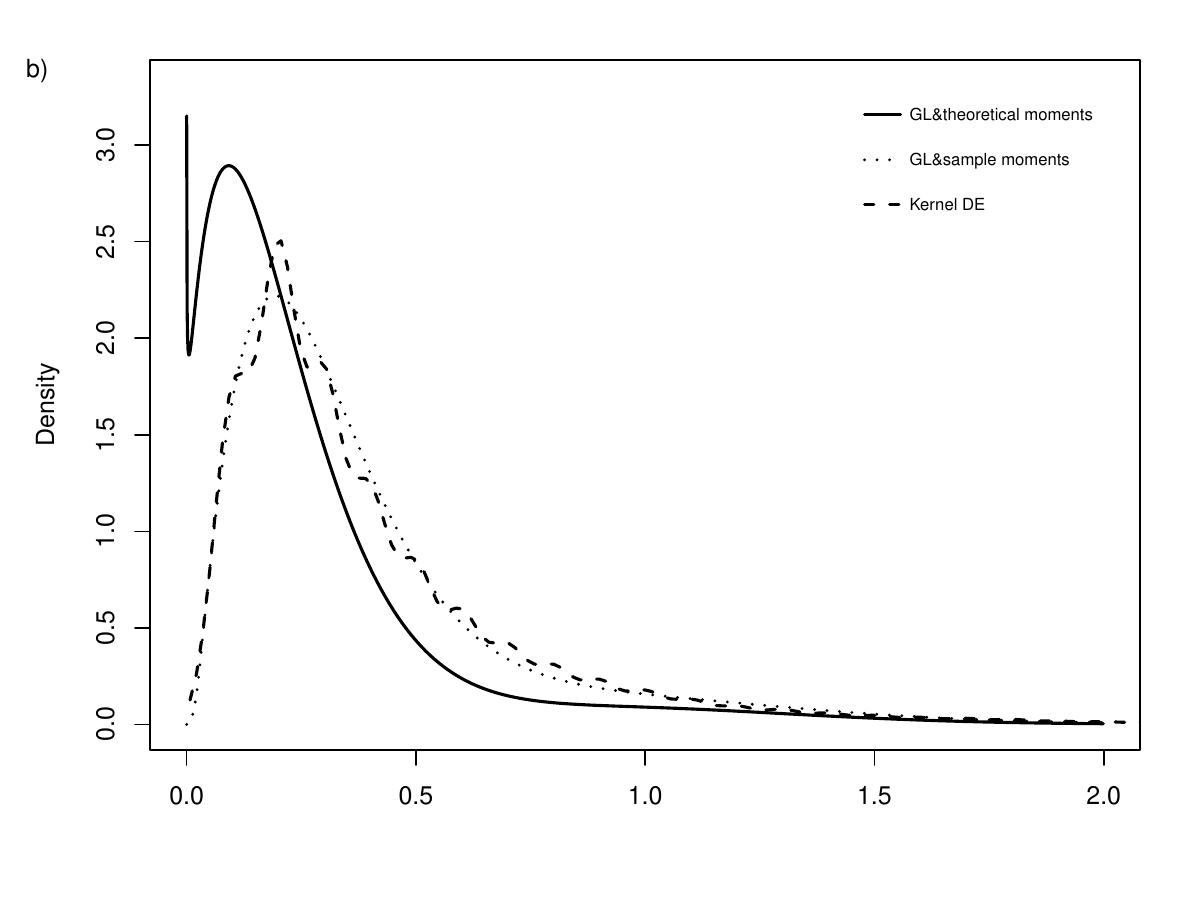}
\end{subfigure}

\vspace{0.2cm}

\begin{subfigure}{0.45\textwidth}
    \centering
    \includegraphics[width=\linewidth]{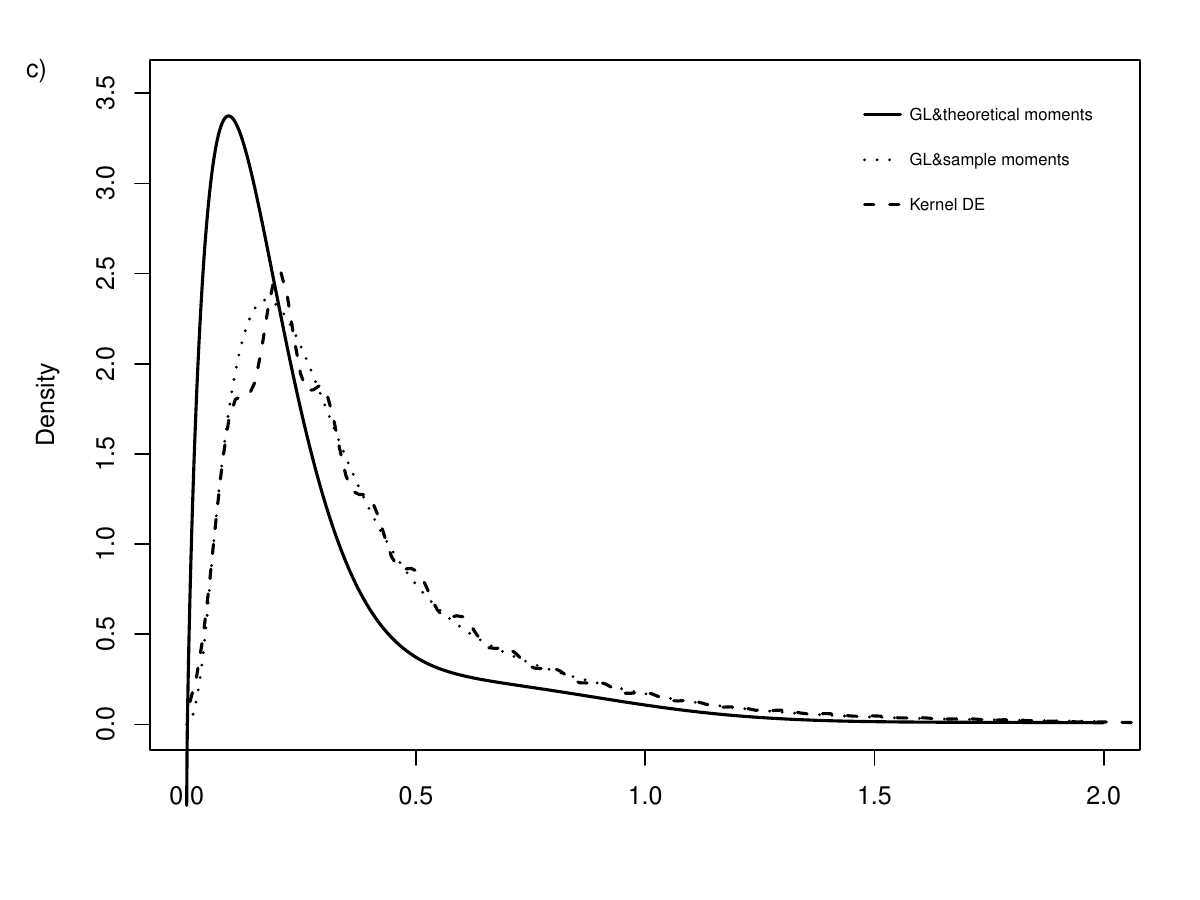}
\end{subfigure}
\hfill
\begin{subfigure}{0.45\textwidth}
    \centering
    \includegraphics[width=\linewidth]{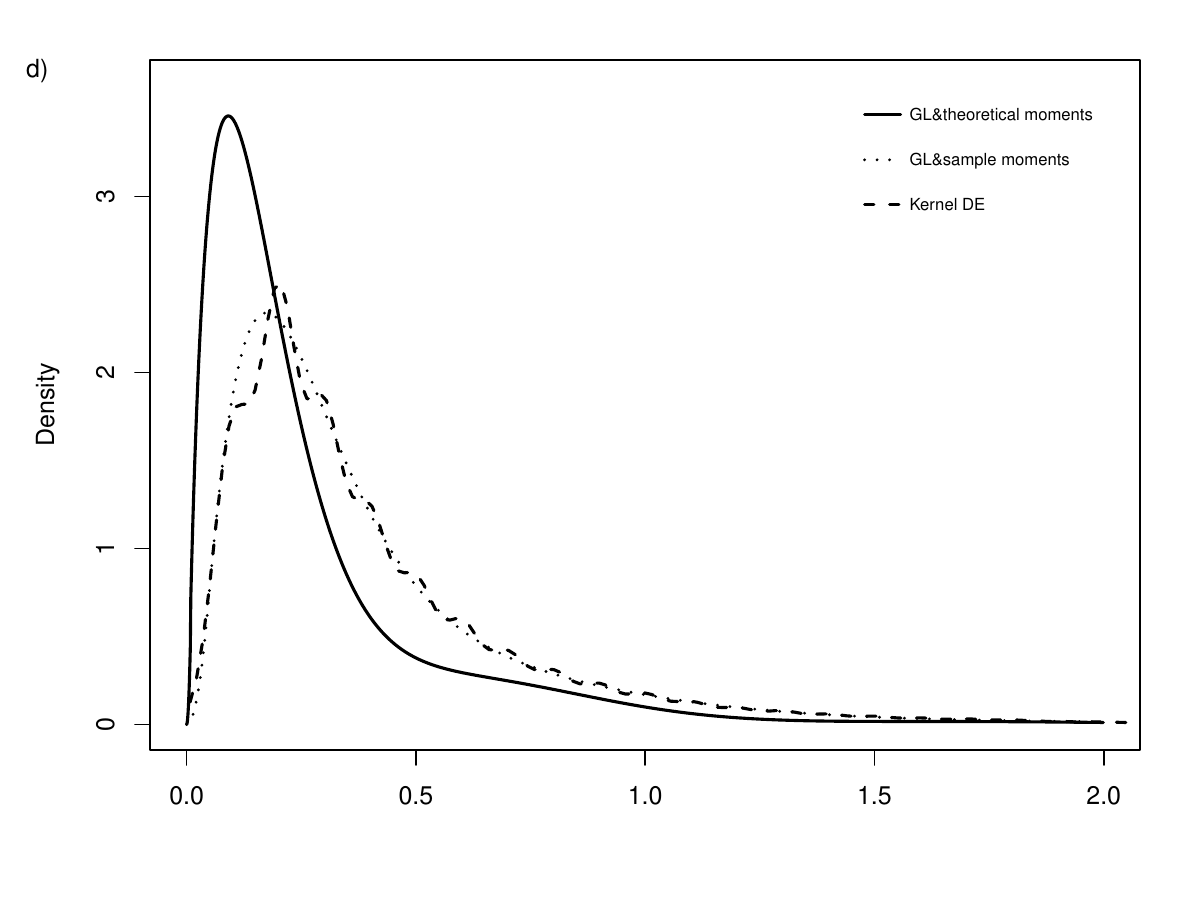}
\end{subfigure}

\caption{Plots of approximated FPT pdfs for $x_0 = 100, \, U = 110, \, \alpha = -0.18, \, \beta = 2.98, \, c_v = 1.10$, with $n=4$ in a), $n=8$ in b), $n=12$ in c) and $n=16$ in d), same line styles as described above.}
\label{Fig3}
\end{figure}

When the coefficient of variation exceeds unity, as in Fig. \ref{Fig3}, the moment-matching procedure yields $\alpha<0$. In this case, the reference Gamma pdf exhibits a singular behavior at the origin, and the truncated expansion becomes more sensitive to the choice of the truncation order $n$. Increasing the number of moments might improve the fit, as in Fig. \ref{Fig3}.
The Monte Carlo estimate appears to display a multimodal shape. However, since the FPT pdf is known to be unimodal, the observed oscillations are attributable to sampling variability and discretization effects.

\subsection{Applications}
\subparagraph{Fisheries management.}
Figs.~\ref{Fig4}--\ref{Fig6} show the Laguerre–Gamma approximation of the FPT pdf for the stochastic logistic model with constant-effort harvesting, arising in fisheries management under random environments \citep{BritesBraumann2022}. Lower and upper thresholds represent, respectively, biological warning levels and recovery targets. The results refer to the upcrossing problem; downcrossing cases are omitted as they show analogous qualitative results. The large values of $x_0$ and $U$ reflect the scale of the application, but the approximation depends mainly on the coefficient of variation and is therefore unaffected by the magnitude of the state variable. Figs.~\ref{Fig4}--\ref{Fig6}  show  again the inverse relationship between dispersion and approximation accuracy: smaller values of $c_v$ yield improved agreement with the Monte Carlo density, while larger $c_v$ increase truncation effects and numerical sensitivity.

\begin{figure}[H]
\centering

\begin{subfigure}{0.48\linewidth}
    \centering
    \includegraphics[width=\linewidth]{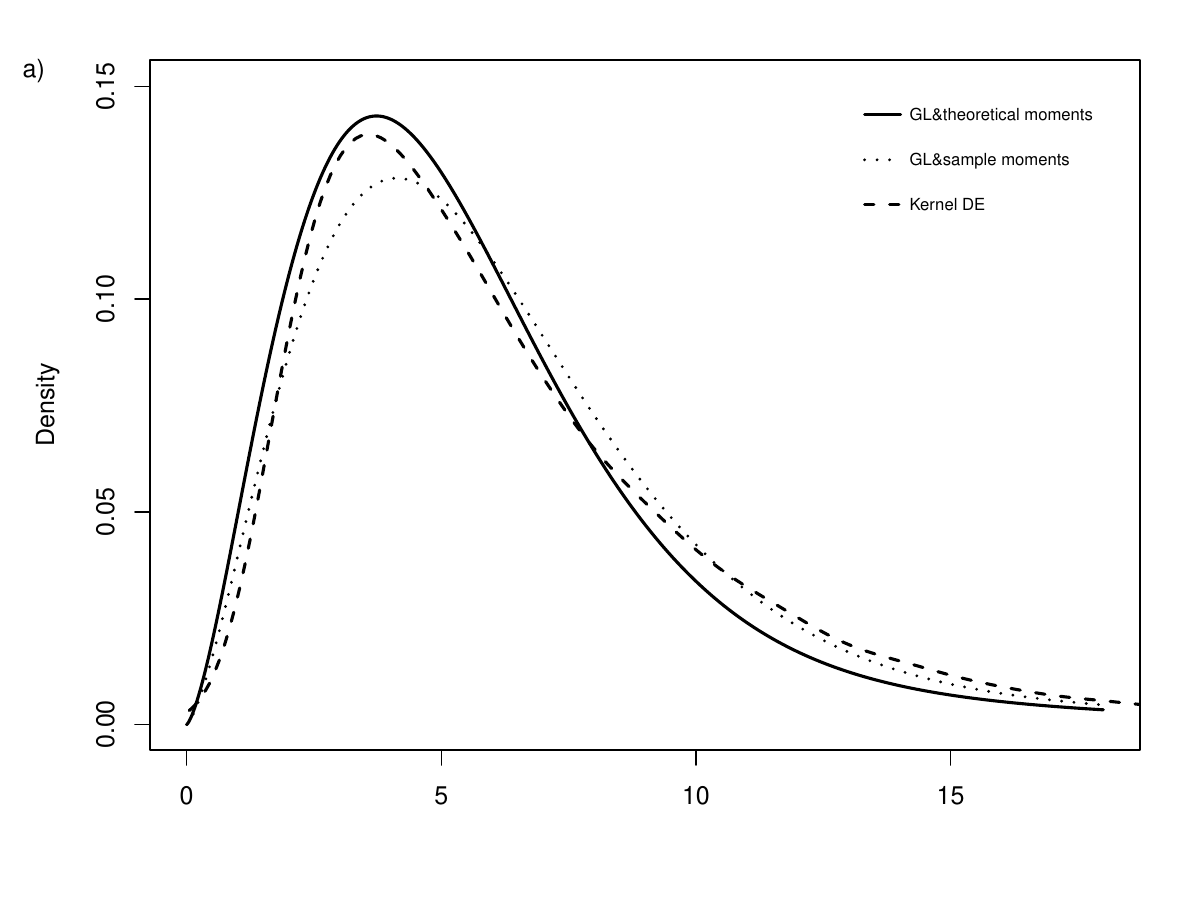}
\end{subfigure}
\hfill
\begin{subfigure}{0.48\linewidth}
    \centering
    \includegraphics[width=\linewidth]{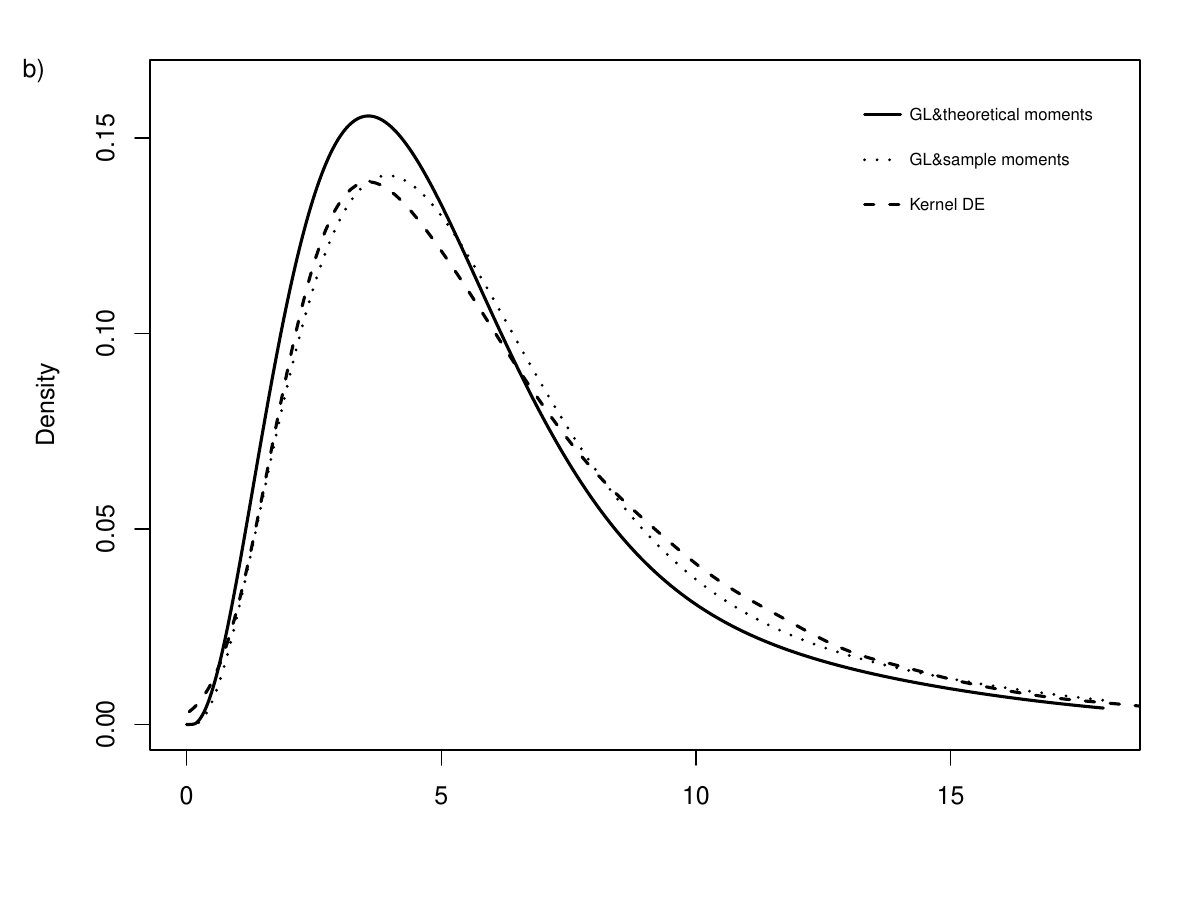}
\end{subfigure}
\caption{Plots of approximated FPT pdfs for $x_0 = 2.01 \times 10^7, U = 3.91 \times 10^7, \alpha = 1.36, \beta = 0.41, c_v = 0.65$ for $n=4$ in a), $n=8$ in b), same line styles as described above.}.
\label{Fig4}
\end{figure}

\begin{figure}[H]
\centering

\begin{subfigure}{0.48\linewidth}
    \centering
    \includegraphics[width=\linewidth]{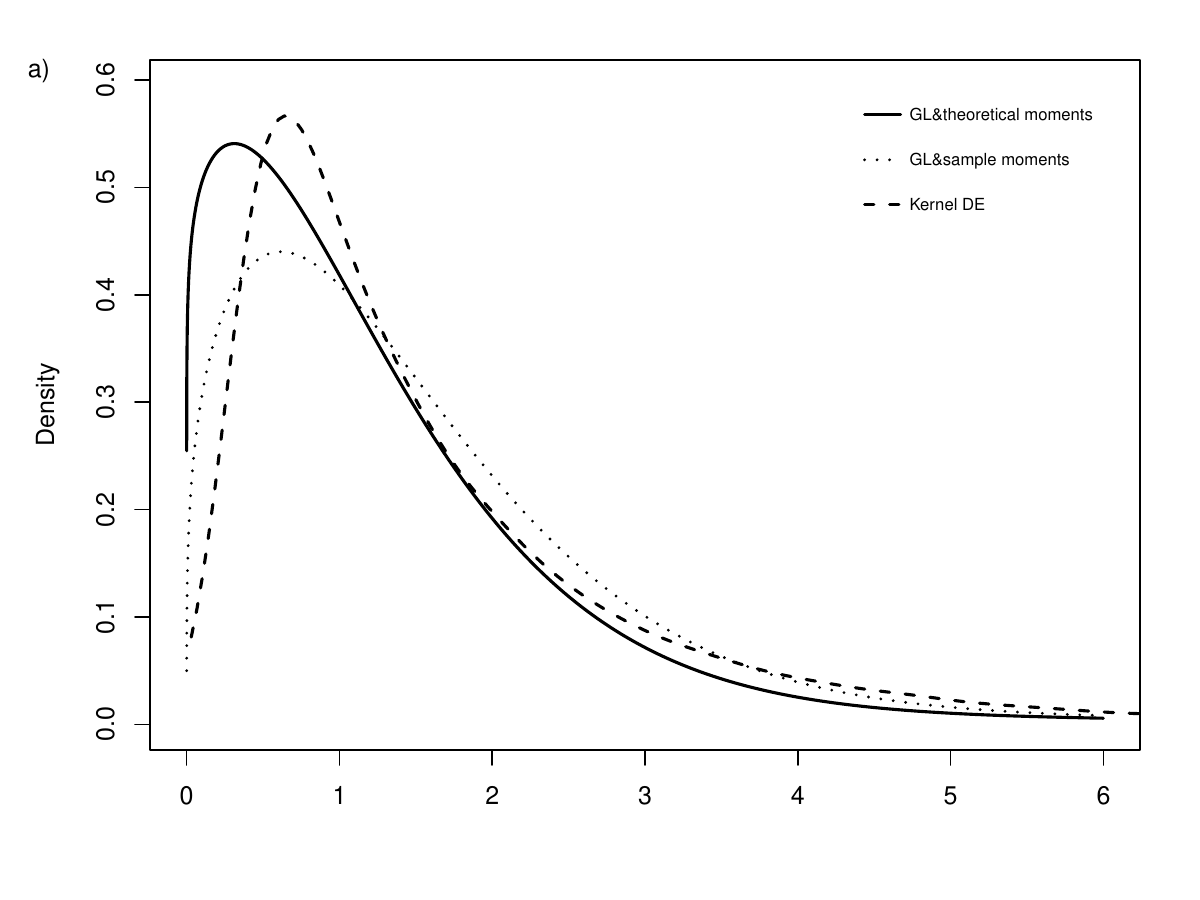}
\end{subfigure}
\hfill
\begin{subfigure}{0.48\linewidth}
    \centering
    \includegraphics[width=\linewidth]{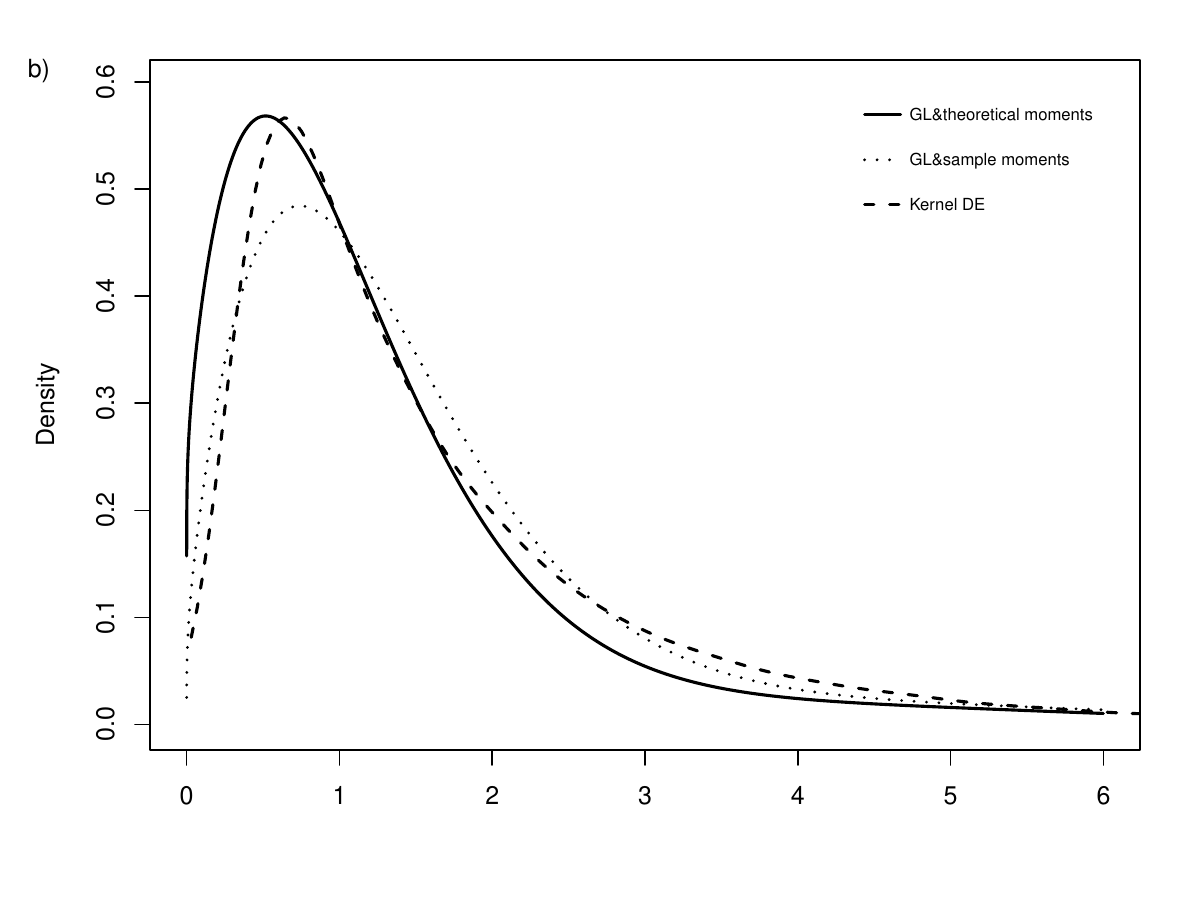}
\end{subfigure}
\caption{Plots of approximated FPT pdfs for $x_0 = 2.01 \times 10^7, \, U = 2.51 \times 10^7, \alpha = 0.10, \beta = 0.82, c_v = 0.95$ for $n=4$ in a), $n=6$ in b), same line styles as described above.}.
\label{Fig5}
\end{figure}

\begin{figure}[H]
\centering

\begin{subfigure}{0.45\textwidth}
    \centering
    \includegraphics[width=\linewidth]{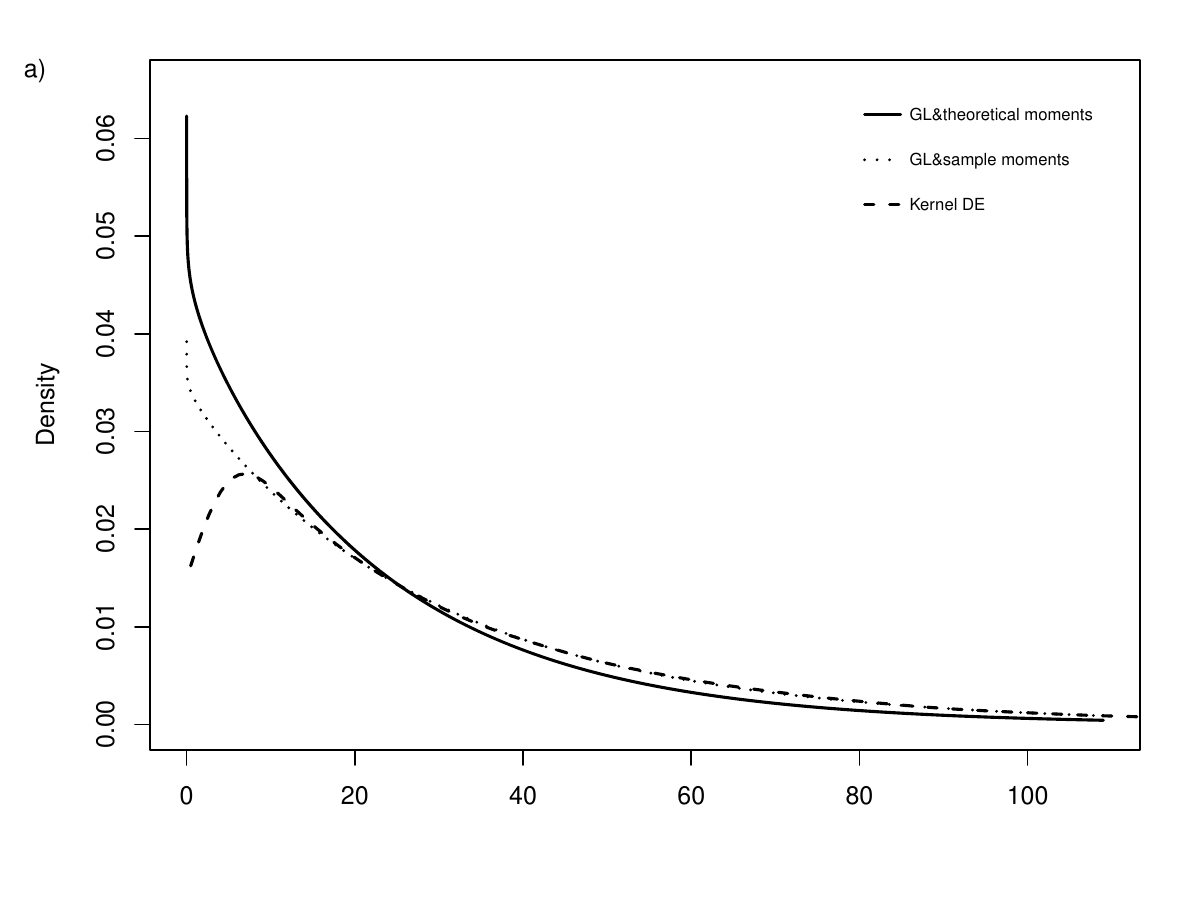}
\end{subfigure}
\hfill
\begin{subfigure}{0.45\textwidth}
    \centering
    \includegraphics[width=\linewidth]{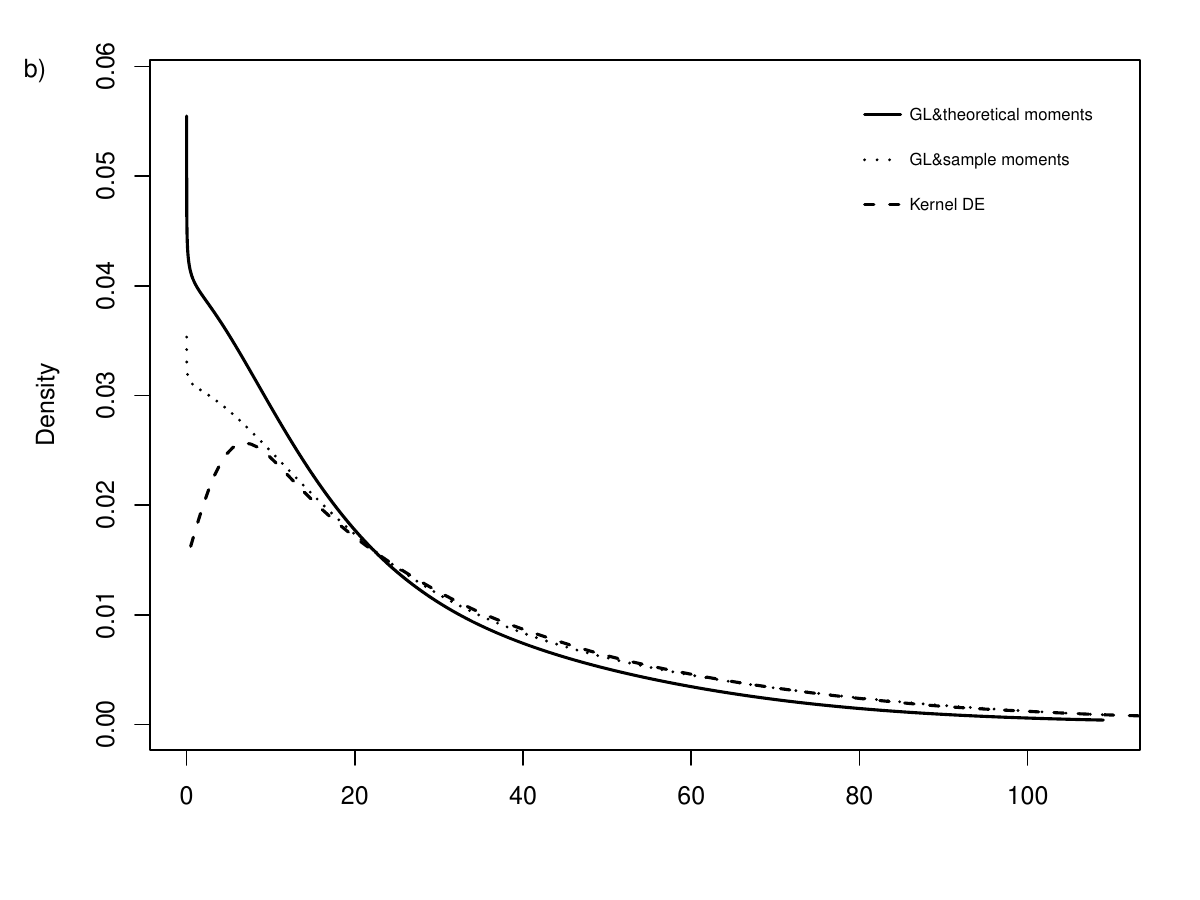}
\end{subfigure}

\vspace{0.2cm}

\begin{subfigure}{0.45\textwidth}
    \centering
    \includegraphics[width=\linewidth]{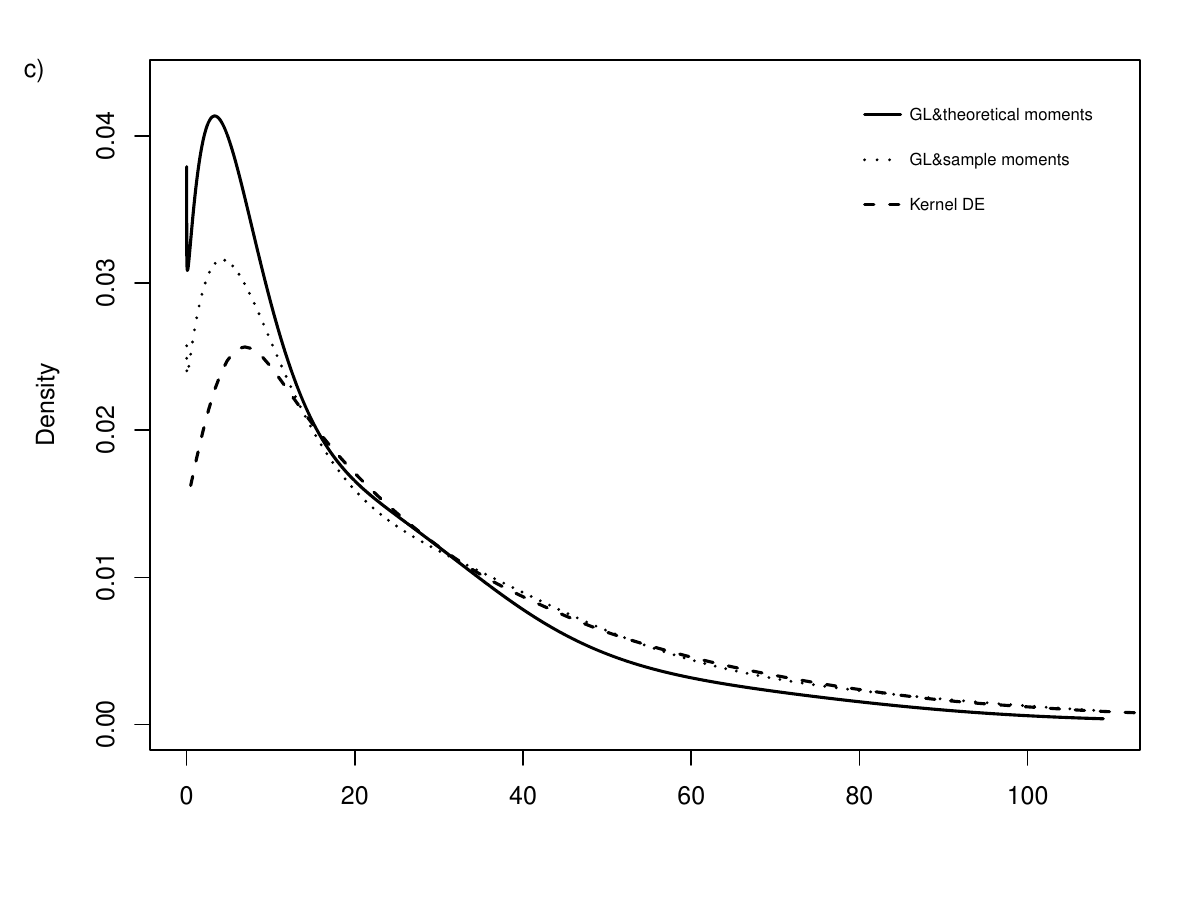}
\end{subfigure}
\hfill
\begin{subfigure}{0.45\textwidth}
    \centering
    \includegraphics[width=\linewidth]{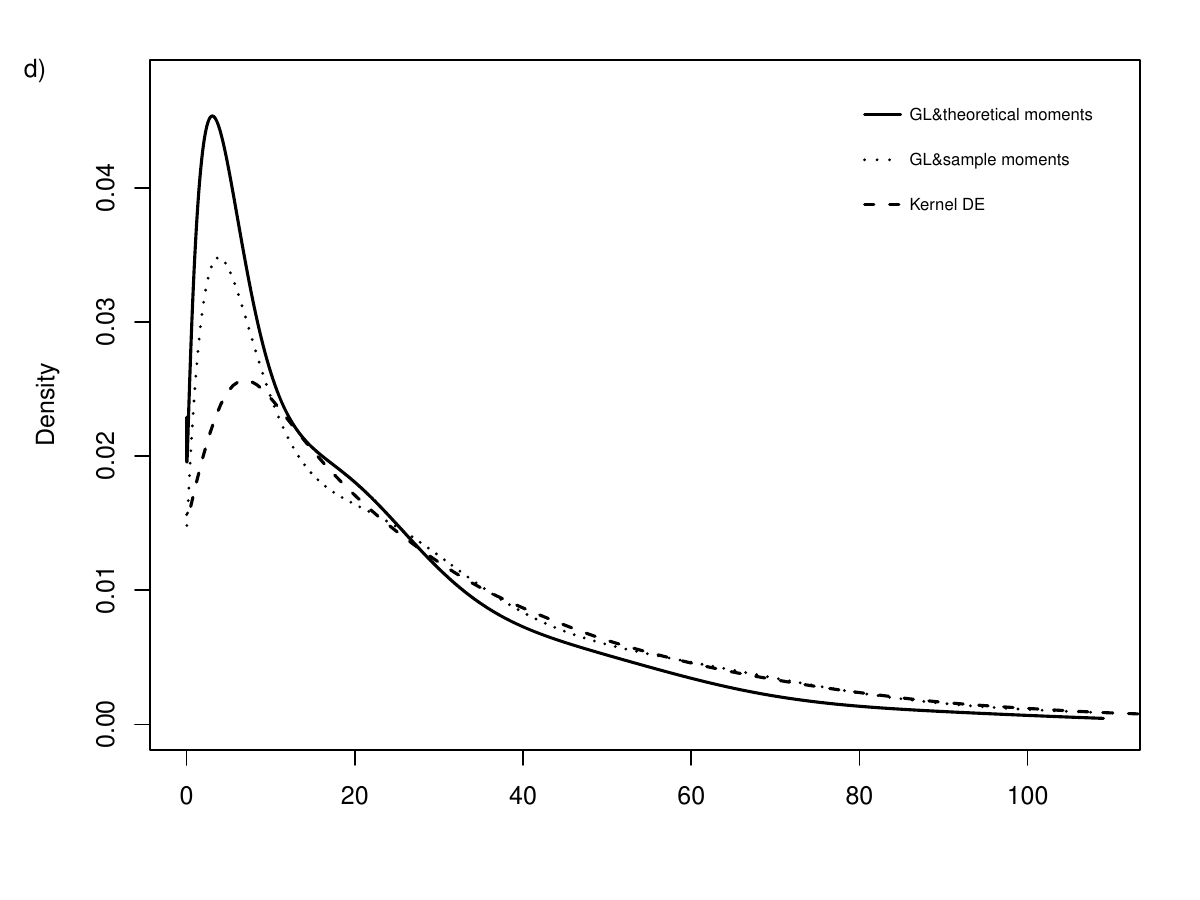}
\end{subfigure}
\caption{Plots of approximated FPT pdfs for $x_0 = 4  \times 10^7, U = 6 \times 10^7, \alpha = -0.04, \beta = 0.04, c_v = 1.02$ for $n=4$ in a), $n=12$ in b), $n=24$ in c) and $n=36$ in d), same line styles as described above.}.
\label{Fig6}
\end{figure}

Figure~\ref{Fig6} corresponds to a regime that may be numerically more demanding. The large higher-order cumulants (of order $10^6$, see Table~\ref{Tab1}) indicate extreme dispersion, leading to a sharply peaked and highly skewed density. This degrades the approximation, as rapidly growing moments increase the Laguerre coefficients and reduce numerical stability.
\begin{table}[H]
\centering
\caption{Summary statistics and cumulant structure for the six scenarios shown in Figures 1--6.}
\begin{tabular}{c c c c c c c c c}
\hline
Fig. & Mean & Var & Sample Mean & Sample Var 
& $\kappa_4$ 
& $\kappa_1/\kappa_2$ 
& $2\kappa_2/\kappa_3$ 
& $3\kappa_3/\kappa_4$ \\
\hline

1  & 1.18  & 0.39  & 1.32  & 0.42  & 0.67   
   & 2.98  & 1.98  & 1.79 \\

2L & 13.35 & 4.49  & 13.68 & 4.40  & 7.60  
   & 2.98  & 1.98  & 1.79 \\

2R & 20.03 & 6.73  & 20.47 & 6.59  & 11.42 
   & 2.97  & 1.98  & 1.78 \\

3  & 0.28  & 0.09  & 0.41  & 0.12  & 0.16  
   & 2.98  & 1.98  & 1.79 \\

4  & 1.33  & 1.62  & 1.62  & 2.02  & 25.91 
   & 0.82  & 0.62  & 0.60 \\

5  & 5.78  & 14.19 & 6.40  & 17.24 & 963.23
   & 0.41  & 0.31  & 0.29 \\

6  & 23.40 & 567.79 & 30.06 & 915.49 & 1990822
   & 0.04  & 0.04  & 0.04 \\

\hline
\end{tabular}
\label{Tab1}
\end{table}

\subparagraph{Maximum likelihood estimation.}

We investigate the use of approximant $\hat g_n$ \eqref{explaguerre} in place of the unknown true FPT pdf within the standard maximum likelihood framework. The log-likelihood function has been maximized numerically using the function {\tt Optim} from the base {\tt R} package {\tt stats}. Technical details can be found in \cite{Belisle1992}, which underlies the algorithm implemented in {\tt Optim}.

Given the relatively large number of parameters $(r, K, E, q, \sigma, x_0, U)$, we fix a subset of parameters and focus on the joint estimation of this smaller set. To assess the MLE performance, a sample of $N \in \{100, 500, 1000\}$ FPTs is generated using the values
\[
r = 0.71, \quad K = 80.5 \times 10^6, \quad q = 3.30 \times 10^{-6}, \quad \sigma = 0.2, \quad E = 104540, \quad x_0 = 100,\quad U = 10^4.
\]
Note that the approximation order $n$ and the parameters $\alpha$ and $\beta$ must be specified when computing the approximant $\hat g_n$. A maximum order $n_{\max}$ is fixed at the outset of the procedure, and the iterative scheme is terminated either when the stopping criteria are satisfied or when $n_{\max}$ is reached. To limit computational cost, $n_{\max}$ is set to $10$ in all cases considered.
The parameters $\alpha$ and $\beta$ are initially determined by moment matching based on sample moments, and subsequently updated at each step of the procedure by matching the theoretical moments corresponding to the estimated current parameter values \citep{Martini2024} .

Table~\ref{Tab2} shows the results when  $\sigma$ (the environmental variability) is estimated jointly with $r$ (the intrinsic growth rate), $x_0$ (the size population) and $U$ (the upcrossing threshold).  
While the method provides stable estimates for the pairs $(\sigma, x_0)$, $(\sigma, U)$, and $(\sigma, r)$, the estimation becomes challenging for other combinations, with convergence issues.
\begin{table}[H]
\centering
\resizebox{\textwidth}{!}{
\begin{tabular}{c c ccc ccc ccc}
\toprule
 &  & \multicolumn{3}{c}{$(\sigma,x_0)$} 
   & \multicolumn{3}{c}{$(\sigma,U)$} 
   & \multicolumn{3}{c}{$(\sigma,r)$} \\
\cmidrule(lr){3-5} \cmidrule(lr){6-8} \cmidrule(lr){9-11}
$N$ & Param. & Bias & MSE & Err.(\%) 
            & Bias & MSE & Err.(\%) 
            & Bias & MSE & Err.(\%) \\
\midrule

\multirow{2}{*}{100}
& $\sigma$ 
& $-1.99 \times 10^{-3}$ & $1.76 \times 10^{-4}$ & $5.31$
& $6.62 \times 10^{-3}$  & $2.66 \times 10^{-3}$ & $9.64$
& $-1.85 \times 10^{-3}$ & $1.83 \times 10^{-4}$ & $5.31$ \\

& second 
& $-5.04$ & $8.57 \times 10^{1}$ & $7.22$
& $2.19 \times 10^{2}$ & $3.87 \times 10^{6}$ & $10.3$
& $-1.38 \times 10^{-3}$ & $5.00 \times 10^{-5}$ & $0.80$ \\

\midrule

\multirow{2}{*}{500}
& $\sigma$ 
& $-5.53 \times 10^{-4}$ & $3.47 \times 10^{-5}$ & $2.37$
& $3.96 \times 10^{-5}$  & $4.60 \times 10^{-5}$ & $2.43$
& $-2.24 \times 10^{-3}$ & $4.41 \times 10^{-5}$ & $2.63$ \\

& second 
& $-4.62$ & $3.13 \times 10^{1}$ & $4.86$
& $4.61 \times 10^{2}$ & $4.12 \times 10^{5}$ & $5.21$
& $-3.71 \times 10^{-3}$ & $2.22 \times 10^{-5}$ & $0.57$ \\

\midrule

\multirow{2}{*}{1000}
& $\sigma$ 
& $-2.52 \times 10^{-4}$ & $1.86 \times 10^{-5}$ & $1.71$
& $7.75 \times 10^{-4}$  & $1.65 \times 10^{-4}$ & $2.23$
& $-2.23 \times 10^{-3}$ & $2.79 \times 10^{-5}$ & $2.09$ \\

& second 
& $-4.61$ & $2.87 \times 10^{1}$ & $4.64$
& $4.38 \times 10^{2}$ & $6.59 \times 10^{5}$ & $5.47$
& $-3.82 \times 10^{-3}$ & $1.87 \times 10^{-5}$ & $0.55$ \\

\bottomrule
\end{tabular}
}
\caption{Maximum likelihood estimations for different sample sizes $N$ and parameter pairs.}
\label{Tab2}
\end{table}

We then extend the analysis to the estimation of three parameters, $(\sigma, r, x_0)$, which is the only triplet exhibiting stable convergence properties, in order to investigate the impact of increased parameter dimensionality on the accuracy and stability of the MLE.

\begin{table}[H]
\centering
\resizebox{0.6\textwidth}{!}{
\begin{tabular}{cccccc}
\toprule
$N$ & Parameter & Bias & MSE & Error (\%) \\
\midrule
\multirow{3}{*}{100} 
& $\sigma$ & $9.00 \times 10^{-2}$ & $1.15 \times 10^{-2}$ & $45.5$ \\
& $r$      & $-1.84\times 10^{-1}$ & $4.69 \times 10^{-2}$ & $25.9$ \\
& $x_0$    & $-7.59\times 10^{1}$  & $6.47 \times 10^{3}$ & $75.9$ \\
\midrule
\multirow{3}{*}{500} 
& $\sigma$ & $-2.14 \times 10^{-2}$ & $5.06 \times 10^{-4}$ & $10.7$ \\
& $r$      & $4.21\times 10^{-2}$  & $1.78 \times 10^{-3}$ & $5.93$ \\
& $x_0$    & $-4.26 \times 10^{1}$ & $1.82 \times 10^{3}$ & $42.6$ \\
\midrule
\multirow{3}{*}{1000} 
& $\sigma$ & $-2.34 \times 10^{-2}$ & $5.87 \times 10^{-4}$ & $11.7$ \\
& $r$      & $-4.39\times 10^{-2}$ & $2.00 \times 10^{-3}$ & $6.18$ \\
& $x_0$    & $6.40 \times 10^{1}$  & $4.56 \times 10^{3}$ & $64.0$ \\
\bottomrule
\end{tabular}
}
\caption{Maximum likelihood estimations for $\sigma$, $r$ and $x_0$ and different sample sizes $N$}
\end{table}
\section{Conclusions}

We have developed a moment-based approach to recover FPT pdfs for stochastic logistic models with constant harvesting. A key contribution is the derivation of closed-form expressions for the FPT moments and cumulants, obtained through a suitable power series expansion of the Laplace transform of the FPT pdf. 
Building on these results, we construct an explicit representation of the FPT pdf in terms of a Laguerre–Gamma expansion, using a Gamma pdf as reference to match the positive support and skewed shape of the FPT pdf. 

From a numerical perspective, the proposed representation can be effectively employed to approximate the FPT pdf by selecting a truncation order that balances accuracy and numerical stability. The method performs particularly well in regimes of moderate dispersion, where the cumulant structure is close to that of a Gamma distribution, while its performance deteriorates when $c_v>1$, mainly because higher-order moments grow rapidly, leading to numerical instability in the evaluation of the truncated series.

The application to fisheries management models shows that the method remains effective even for large-scale population levels, as its performance primarily depends on dispersion. Moreover, 
the parameter estimations  further indicate that the proposed approximation can be used with the maximum likelihood method. In particular, this approach appears to be a promising technique worthy of further investigation, although its numerical stability and robustness require additional analysis, especially with respect to parameter identifiability and conditioning of the likelihood surface.

Future work may focus on the development of refined criteria for selecting the truncation order, for instance by exploiting the moment-matching property between the true and approximated densities, as well as on adaptive truncation strategies. Further extensions to more general stochastic models and boundary conditions are also of interest. 

\appendix
\renewcommand{\theequation}{\thesection.\arabic{equation}}

\section{Appendix}
\setcounter{equation}{0}

\renewcommand{\theHequation}{\thesection.\arabic{equation}}

\subparagraph{Algebra of formal power series.}  
If $A(t)=\sum_{n\ge 0} a_nt^n/n!$ and $B(t)=\sum_{n\ge 0} b_n t^n/n!,$ the composition $A(B(t))=\sum_{n\ge 0} c_n t^n/n!$ has coefficients
$c_n=\sum_{k=0}^n a_k\,B_{n,k}\!\bigl(b_1,b_2,\dots,b_{n-k+1}\bigr), (n\ge 0),$ (with $B_{0,0}=1$ and $B_{n,0}=0$ for $n\ge 1$)
where
\begin{equation} 
B_{n,k}(b_1,\dots,b_{n-k+1})
=  n! \sum
\prod_{m=1}^{n-k+1} \frac{b_m^{\,j_m}}{(m!)^{j_m}\,j_m!},
\label{polinc}
\end{equation}
with the sum running over all  integers $(j_1,\dots,j_{n-k+1})$ such that
$j_1+j_2+\cdots+j_{n-k+1}=k,$ and $j_1+2j_2+\cdots+(n-k+1)j_{n-k+1}=n.$ 
The polynomials $\{B_{n,k}\}$ are the (exponential) incomplete Bell polynomials. 
When $A(t)=\exp(t)$, the coefficients $\{c_n\}$ reduce to the
complete Bell polynomials, namely
\begin{equation}
B_n(b_1,\dots,b_n)=\sum_{k=1}^n B_{n,k}(b_1,\dots,x_{n-k+1})\quad(n\ge 1),\qquad B_0=1.
\label{completeBell}
\end{equation}
If $a_0 \ne 0$, we have
\begin{equation} 
\log A(t) = \log a_0 + \sum_{n\ge 1} L_n(a_0;a_1,\dots,a_n)\,
\frac{t^n}{n!},
\label{logexpansion}
\end{equation}
where $\{L_n\}$ are the logarithmic polynomials given by
\begin{equation}
L_n(a_0;a_1,\dots,a_n)
= \sum_{k=1}^n (-1)^{k-1}(k-1)!\, B_{n,k}\!\Bigl(
\frac{a_1}{a_0},\frac{a_2}{a_0},\dots,\frac{a_{n-k+1}}{a_0}\Bigr),\qquad n\ge 1.
\label{logpoly}
\end{equation}
The reciprocal $\frac{1}{A(t)}=\sum_{n\ge 0} r_n t^n/n!$ 
is determined by the coefficient recursion
\begin{equation}
r_0=\frac{1}{a_0},\qquad r_n=-\frac{1}{a_0}\sum_{j=1}^n \binom{n}{j}a_j\,r_{n-j}\quad(n\ge 1).
\label{reciproco}
\end{equation}
The coefficients $\{r_n\}$
also admits the closed-form in terms of Bell polynomials:
\begin{equation}
r_n=\frac{1}{a_0}\sum_{k=0}^n (-1)_k\,
B_{n,k}\!\Bigl(\frac{a_1}{a_0},\frac{a_2}{a_0},\dots,\frac{a_{n-k+1}}{a_0}\Bigr),
\label{reciprocochiuso}
\end{equation}
where $( -1 )_k= (-1)^k k!$ and $( -1 )_0=1.$ 
More generally, if $A(t)/B(t)=\sum_{n\ge 0} q_n t^n/n!$ 
the coefficients $\{q_n\}$ satisfy the recursion
\begin{equation}
q_0=\frac{a_0}{b_0},\qquad
q_n=\frac{1}{b_0}\left(a_n-\sum_{k=1}^n \binom{n}{k}\,b_k\,q_{n-k}\right)
\quad(n\ge 1).
\label{ratio}
\end{equation}
The coefficients $\{q_n\}$
also admits the closed-form in terms of Bell polynomials:
\[
q_n=\frac{1}{b_0}\sum_{j=0}^n \binom{n}{j}\left(
\sum_{k=0}^{\,j} (-1)_k\,
B_{j,k}\!\Bigl(\frac{b_1}{b_0},\frac{b_2}{b_0},\dots,\frac{b_{j-k+1}}{b_0}\Bigr)\right)\, a_{n-j}
\qquad(n\ge 0).
\]

\section{Appendix}
\setcounter{equation}{0}
\begin{proof}[\bf Proof (Theorem 1)]
\label{proof:upper}
The power series expansion of the Kummer function in \eqref{GU}
 yields
\begin{equation} \label{ap:phiexpansion}
\Phi\left( u[1-s(\lambda)], \, 
1-2us(\lambda), \, v y \right) = 
\sum_{n \geq 0}^{\infty} \frac{\langle  u[1-s(\lambda)] \rangle_n}{\langle1-2us(\lambda) \rangle_n} \cdot \frac{(v y)^n}{n!},
\end{equation}
where $\langle x \rangle_n = x ( x+1) \cdots (x+n-1)$ denotes the \textit{rising factorial}. We now compute the coefficients 
$\tilde{\Lambda}_{n,m}$ 
in the expansion
\begin{equation}
\langle u[1 - s(\lambda)] \rangle_n =  \sum_{m\geq0} \tilde{\Lambda}_{n,m} \frac{a^m\lambda^m}{m!}.
\label{num}
\end{equation}
Let $H(t)=\langle u(1-t) \rangle_n$ and write $H(t) = \sum_{k=0}^n c_k t^k.$ Then 
$\langle u[1 - s(\lambda)] \rangle_n = H[(1+z)^{1/2}] 
$ with $z:= a \lambda$ and therefore
\begin{equation}
\tilde{\Lambda}_{n,m} =\left.
\sum_{k=0}^n c_k 
\frac{{\rm d}^m}{{\rm d} z^m} (1 + z)^{k/2} \right|_{z=0}= \sum_{k=0}^n c_k \left(\frac{k}{2} \right)_m.
\label{lambda}
\end{equation}
Since $\Theta^n t^k = k^n t^k$ with $\Theta$ the Euler operator, it follows $(\Theta)_m t^k = (k)_m t^k$ and hence
$$\left( \frac{\Theta}{2}\right)_m H(t) = \sum_{k=0}^n c_k \left( \frac{\Theta}{2}\right)_m t^k.$$ Evaluating at 
$t=1,$ the coefficients $\tilde{\Lambda}_{n,m}$ 
in \eqref{lambda} have the form \eqref{firstcoeff}
with $\tilde{\Lambda}_{n,0} = 1.$ Applying the same reasoning to the denominator  $\langle1-2us(\lambda) \rangle_n$ yields 
\begin{equation}
\langle 1 - 2us(\lambda) \rangle_n =  \sum_{k\geq0} \Lambda_{n,k}\, \frac{a^k\lambda^k}{k!},
\label{den} 
\end{equation}
with coefficients $\{\Lambda_{n,k}\}$ given in \eqref{firstcoeff} and $\Lambda_{n,0} = 1.$ Using \eqref{ratio}, we obtain
\begin{equation*}
    \frac{\langle  u[1-s(\lambda)] \rangle_n}{\langle1-2us(\lambda) \rangle_n} = \sum_{m\geq0} M_{n,m} \frac{a^m\lambda^m}{m!} 
\end{equation*}
where $\{M_{n,m}\}$ are given in \eqref{ratioM}. Substituting into \eqref{ap:phiexpansion} gives
\begin{equation}
    \sum_{n \geq 0} \frac{\langle  u[1-s(\lambda)] \rangle_n}{\langle1-2us(\lambda) \rangle_n} \frac{(v y)^n}{n!} = 
     \sum_{n \geq 0} \left(\sum_{m\geq0} M_{n,m} \frac{a^m\lambda^m}{m!} \right) \frac{(v y)^n}{n!} = \sum_{k\geq0} l_k(y) \frac{\lambda^k}{k!},
\label{expT}
\end{equation}
with $\{l_k(y)\}$ in \eqref{0coeff}.  Assuming that $1-2u s(\lambda)\notin\{0,-1,-2,\dots\}$ in a neighbourhood of $\lambda=0$, the summations over $n$ and $m$ in \eqref{expT} can be interchanged. 
The result then follows by applying the Cauchy product to multiply the power
series in \eqref{expT} with
\begin{equation} 
y^{u(1-s(\lambda))} = y^u \exp \{ -u \log(y) s(\lambda) \} =  \sum_{k \geq 0} q_k \frac{\lambda^k}{k!}
\label{exp}
\end{equation}
 where $\{q_k\}$ are given in \eqref{0coeff}. 
\end{proof}
\begin{proof}[\bf Proof (Theorem 2)]  The Tricomi function in \eqref{GL} can be rewritten in term of the generalized hypergeometric function as 
\begin{equation} \label{ap:psiexpansion}
\Psi\left( u[1-s(\lambda)], \, 
1-2us(\lambda), \, v y \right) = \frac{1}{(vy)^{u[1-s(\lambda)]}} \;{}_2F_0\left(u[1-s(\lambda)], u[1 + s(\lambda)], -\frac{1}{(vy)}\right)
\end{equation}
where
\begin{equation}
    {}_2F_0\left(u[1-s(\lambda)], u[1 + s(\lambda)], -\frac{1}{(vy)}\right)= \sum_{n \geq 0}^{\infty} {\langle  u[1-s(\lambda)] \rangle_n} \cdot{\langle u[1+s(\lambda)]  \rangle_n} \cdot \left(-\frac{1}{v y}\right)^n \cdot \frac{1}{n!}.
\end{equation} 
The function $\langle u[1 - s(\lambda)] \rangle_n$ admits the expansion given in \eqref{num} with coefficients in \eqref{firstcoeff}. Proceeding analogously for $\langle u[1 + s(\lambda)] \rangle_n$, one obtains
\begin{equation}
\langle u[1 + s(\lambda)] \rangle_n =  \sum_{m\geq0} \bar{\Lambda}_{n,m} \frac{a^k\lambda^m}{m!}
\end{equation}
where $\bar{\Lambda}_{n,m}$ are given in \eqref{IIcoeffdowncomp} and $\bar{\Lambda}_{n,0} = 1.$ Taking the Cauchy product of the two series yields
\begin{equation}
\langle u[1 - s(\lambda)] \rangle_n \cdot  \langle u[1 + s(\lambda)] \rangle_n = \sum_{m\geq0} \bar{M}_{n,m} \frac{a^m\lambda^m}{m!}
\label{product}
\end{equation}
where $\{\bar M_{n,m}\}$ are given in \eqref{lbar}. Plugging \eqref{product} into \eqref{ap:psiexpansion} gives

\begin{eqnarray}
\!\!\!\!\!\!\! & & \!\!\!\!\!\!\! \!\!\!\!\!\!\! 
\!\!\!\!\!\!\! \frac{1}{(vy)^{u[1-s(\lambda)]}} \sum_{n \geq 0}^{\infty} {\langle  u[1-s(\lambda)] \rangle_n} \cdot{\langle u[1+s(\lambda)]  \rangle_n} \cdot \left(-\frac{1}{v y}\right)^n \cdot \frac{1}{n!} 
 \nonumber \\
  &\!\!\!\!\!\!\! \!\!\!\!\!\!\! \!\!\!\!\!\!\!\!\!\! = & \!\!\!\!\!\!\! \!\!\!\!\!\!\!\frac{1}{(vy)^{u[1-s(\lambda)]}} \sum_{n \geq 0}^{\infty} \left( \sum_{m\geq0} \bar{M}_{n,m} \frac{a^m\lambda^m}{m!} \right) \left(-\frac{1}{v y}\right)^n \cdot \frac{1}{n!}
= \frac{1}{(vy)^{u[1-s(\lambda)]}}\sum_{k\geq0} \bar{l}_k(y) \frac{\lambda^k}{k!}
\label{exp3}
\end{eqnarray}
with $\{\bar l_k(y)\}$ given in \eqref{lbar}. 
The final result then follows upon observing that $y^{,u[1-s(\lambda)]}$ simplifies when \eqref{exp3} is substituted into \eqref{GL}.
\\ \medskip \\
{\bf Acknowledgements}
\\  \\
The authors would like to thank Prof. Nuno M. Brites
(Lisbon School of Economics and Management, Universidade de Lisboa) for bringing this problem to our attention. E. Di Nardo is component of the GNAMPA-INDAM italian group. 

\end{proof}

\bibliographystyle{plainnat}
\bibliography{biblio}

@book{Allen2010,
  author    = {Allen, Linda J. S.},
  title     = {An Introduction to Stochastic Processes with Applications to Biology},
  publisher = {CRC Press},
  year      = {2010}
}

@book{Renshaw2011,
  author    = {Renshaw, Eric},
  title     = {Stochastic Population Processes},
  publisher = {Oxford University Press},
  year      = {2011}
}

@book{Mao2007,
  author    = {Mao, Xuerong},
  title     = {Stochastic Differential Equations and Applications},
  publisher = {Woodhead Publishing},
  year      = {2007}
}

@article{Beddington1977,
  author  = {Beddington, John R. and May, Robert M.},
  title   = {Harvesting natural populations in a randomly fluctuating environment},
  journal = {Science},
  volume  = {197},
  pages   = {463--465},
  year    = {1977}
}

@article{Lande1993,
  author  = {Lande, Russell and Engen, Steinar and S{\ae}ther, Bernt-Erik},
  title   = {Optimal harvesting of fluctuating populations with a risk of extinction},
  journal = {The American Naturalist},
  volume  = {142},
  pages   = {105--130},
  year    = {1993}
}

@article{Tuckwell1974,
  author  = {Tuckwell, H. C.},
  title   = {A study of some diffusion models of population growth},
  journal = {Theoretical Population Biology},
  volume  = {5},
  pages   = {345--357},
  year    = {1974}
}

@article{Braumann1999,
  author  = {Braumann, Carlos A.},
  title   = {Variable effort harvesting models in random environments},
  journal = {Mathematical Biosciences},
  volume  = {156},
  pages   = {1--19},
  year    = {1999}
}

@book{KarlinTaylor1981,
  author    = {Karlin, Samuel and Taylor, Howard M.},
  title     = {A Second Course in Stochastic Processes},
  publisher = {Academic Press},
  year      = {1981}
}

@article{BritesBraumann2022,
  author  = {Brites, Nuno M. and Braumann, Carlos A.},
  title   = {Moments and probability density of threshold crossing times for populations in random environments under sustainable harvesting policies},
  journal = {Computational Statistics},
  year    = {2022},
  doi     = {10.1007/s00180-022-01237-0}
}

@article{GietValloisWantz2015,
  author  = {Giet, Jean-S{\'e}bastien and Vallois, Pierre and Wantz-M{\'e}zi{\`e}res, Sandrine},
  title   = {The logistic S.D.E.},
  journal = {Theory of Stochastic Processes},
  volume  = {20},
  number  = {36},
  pages   = {28--62},
  year    = {2015}
}

@book{AbramowitzStegun1964,
  author    = {Abramowitz, Milton and Stegun, Irene A.},
  title     = {Handbook of Mathematical Functions with Formulas, Graphs, and Mathematical Tables},
  publisher = {National Bureau of Standards},
  series    = {Applied Mathematics Series},
  volume    = {55},
  year      = {1964},
  address   = {Washington, DC}
}

@article{DOM2024,
  author    = {Di Nardo, Elvira and D'Onofrio, Giuseppe and Martini, Tommaso},
  title     = {Orthogonal gamma–based expansion for the CIR’s first passage time distribution},
  journal   = {Applied Mathematics and Computation},
  volume    = {480},
  pages     = {128911},
  year      = {2024},
  doi       = {10.1016/j.amc.2024.128911}
}

@article{DiNardoDOnofrio2021,
  author    = {Di Nardo, Elvira and D'Onofrio, Giuseppe},
  title     = {On the cumulants of the first passage time of the inhomogeneous geometric Brownian motion},
  journal   = {Mathematics},
  volume    = {9},
  number    = {9},
  pages     = {956},
  year      = {2021},
  doi       = {10.3390/math9090956}
}

@article{AbateWhitt1996,
  author  = {Abate, Joseph and Whitt, Ward},
  title   = {An operational calculus for probability distributions via Laplace transforms},
  journal = {Advances in Applied Probability},
  volume  = {28},
  number  = {1},
  pages   = {75--113},
  year    = {1996}
}

@manual{fptdApproxManual,
  title        = {{fptdApprox}: Approximation of First-Passage-Time Densities for Diffusion Processes},
  author       = {Rom{\'a}n-Rom{\'a}n, Patricia and Serrano-P{\'e}rez, Juan J. and Torres-Ruiz, Francisco},
  year         = {2025},
  note         = {R package manual, version 2.5},
  url          = {https://cran.r-project.org/web/packages/fptdApprox/fptdApprox.pdf}
}

@book{charalambides2002enumerative,
  author    = {Charalambides, Charalambos A.},
  title     = {Enumerative Combinatorics},
  publisher = {Chapman \& Hall/CRC},
  year      = {2002},
  address   = {Boca Raton, FL},
  pages     = {609}
}

@book{jordan1949calculus,
  title={Calculus of Finite Differences},
  author={Jordan, Charles},
  edition={2},
  year={1949},
  publisher={Chelsea Publishing Company},
  address={New York}
}

@article{Otunuga2021,
  author  = {Otunuga, O. M.},
  title   = {Time-dependent probability density function for general stochastic logistic population model with harvesting},
  journal = {Physica A: Statistical Mechanics and its Applications},
  volume  = {573},
  pages   = {125931},
  year    = {2021},
  doi     = {10.1016/j.physa.2021.125931}
}

@article{Otunuga2025,
  author  = {Otunuga, O. M.},
  title   = {Stochastic modeling and first-passage-time analysis of oncological time metrics with dynamic tumor barriers},
  journal = {Scientific Reports},
  volume  = {15},
  pages   = {14941},
  year    = {2025},
  doi     = {10.1038/s41598-025-95475-z}
}

@book{Olver2010,
  author    = {Olver, Frank W. J. and Lozier, Daniel W. and Boisvert, Ronald F. and Clark, Charles W.},
  title     = {NIST Handbook of Mathematical Functions},
  publisher = {Cambridge University Press},
  address   = {New York},
  year      = {2010},
  note      = {National Institute of Standards and Technology, U.S. Department of Commerce}
}

@book{redner2001guide,
  title={A guide to first-passage processes},
  author={Redner, Sidney},
  year={2001},
  publisher={Cambridge university press}
}

@article{di2023approximating,
  title={Approximating the first passage time density from data using generalized Laguerre polynomials},
  author={Di Nardo, Elvira and D’Onofrio, Giuseppe and Martini, Tommaso},
  journal={Communications in Nonlinear Science and Numerical Simulation},
  volume={118},
  pages={106991},
  year={2023},
  publisher={Elsevier}
}

@article{BensoussanGlowinskiRascanu1992,
  author  = {Bensoussan, Alain and Glowinski, Roland and R{\u{a}}{\c{s}}canu, Adrian},
  title   = {Approximation of Some Stochastic Differential Equations by the Splitting Up Method},
  journal = {Applied Mathematics and Optimization},
  volume  = {25},
  number  = {2},
  pages   = {81--106},
  year    = {1992},
  doi     = {10.1007/BF01442569}
}

@article{Belisle1992,
  author  = {B\'elisle, C. J. P.},
  title   = {Convergence theorems for a class of simulated annealing algorithms on $\mathbb{R}^d$},
  journal = {Journal of Applied Probability},
  volume  = {29},
  number  = {4},
  pages   = {885--895},
  year    = {1992}
}

@phdthesis{Martini2024,
  author       = {Tommaso Martini},
  title        = {Statistical and Probabilistic Approaches to Hydrological Data Analysis: Rainfall Patterns, Copula-like Models and First Passage Time Approximations},
  school       = {Università degli Studi di Torino},
  year         = {2024},
  month        = dec,
  type         = {PhD thesis},
  url          = {https://iris.unito.it/handle/2318/2043022}
}

\end{document}